\newtheorem{theorem}{Theorem}
\newtheorem{lemma}{Lemma}
\newtheorem{prop}{Proposition}
\newtheorem{corollary}{Corollary}
\renewcommand{\E}{\mathbb{E}}
\newcommand{\Var}{\mathbb{V}{\rm ar}}
\newcommand{\Prob}{\mathbb{P}}
\newcommand{\Polya}{P\'{o}lya}
\newcommand{\field}{\mathbb{F}}
\newcommand{\indicator}{\mathbb{I}}
\newcommand{\given}{\, \vert \,}
\newcommand{\convL}{\, \overset{L_1}{\longrightarrow} \,}
\newcommand{\convP}{\, \overset{P}{\longrightarrow} \,}
\newcommand{\convD}{\, \overset{D}{\longrightarrow} \,}
\newcommand{\Expo}{{\rm Exp}}
\begin{document}
\begin{center}
	{\Large \bf  
		On several properties of plane-oriented recursive trees}
	
	\bigskip
	{\bf Panpan Zhang}
	
	\bigskip
	{\tiny Department of Biostatistics, Epidemiology and Informatics, Perelman School of Medicine, University of Pennsylvania, Philadelphia, PA 19104, U.S.A.}
	
	\bigskip
	
	\today
\end{center}

\bigskip\noindent
{\bf Abstract.}
In this paper, several properties of plain-oriented recursive trees (PORTs) are uncovered. Specifically, we investigate the degree profile of a PORT by determining the exact probability mass function of the degree of a node with a fixed label. We compute the expectation and variance of the degree variable via a \Polya\ urn approach. In addition, we look into a topological index, the Zagreb index, of this class of trees. We calculate the exact first two moments of the Zagreb index by using recurrence methods. We also provide several evidence in favor of our conjecture that the Zagreb index of PORTs do not follow a Gaussian law asymptotically. Lastly, we determine the limiting degree distribution in Poissonized PORTs, and show that it is exponential after being properly scaled.

\bigskip
\noindent{\bf AMS subject classifications.}

Primary:
05C05,
68Q87
Secondary: 
60C05
  
\bigskip
\noindent{\bf Key words.} Combinatorial probability ; Degree distribution ; Plain-oriented recursive trees ; \Polya\ urn ; Poissonization ; Zagreb index

\section{Introduction}
In graph theory, a {\em tree} refers to a connected structure with no cycles~\cite[page 24]{Berge}. A {\em random recursive tree} is an unordered labeled tree with label set such that there exists an increasing unique path from the {\em root} (the most primitive node labeled with $1$) to the node labeled with $j$ for all $2 \le j \le n$. This class of uniform recursive trees was proposed in the late 1960s, and has found applications in a plethora of areas, such as spread of epidemics~\cite{Moon}, genealogy~\cite{Najock}, and the pyramid scheme~\cite{Gastwirth}. 

In this paper, we consider a class of nonuniform random recursive trees---plane-oriented recursive trees (PORTs). A {\em plane-ordered recursive tree} is a tree in which descendants of each node are ordered. At time $n \ge 1$, we denote the structure of a PORT as $T_n$, i.e., a PORT consisting of $n$ nodes. The tree $T_n$ is obtained by starting with a single node labeled with $1$ (i.e., root). Upon each insertion point $n \ge 2$, a node labeled with $n$ joins into the tree; the probability of the newcomer (the node labeled with $n$) adjoint to the node labeled with $i$, for $1 \le i \le n - 1$, in $T_{n - 1}$  is proportional to the {\em degree}\footnote{The degree of a node is the number of edges incident with the node.} of the recruiter (the node labeled with $i$). The key feature of this class of trees is that a {\em parent} node with higher degrees is more attractive to the newcomers, which coincides with a manifestation of the economic principles---``the rich get richer'' and ``success breeds success.'' 

Precursory research on PORTs traced back to the late 1980s and the early 1990s. The exact and asymptotic moments of two degree profile random variables, the number of nodes of a given degree and the degree of a fixed node, were investigated in~\cite{Szymanski}. The distribution of the depth of nodes was determined by~\cite{Mahmoud1991}. The exact and asymptotic distribution of leaves (terminal vertices) in PORTs and subtrees (branches) were studied by~\cite{Mahmoud1993}. The asymptotic average of internal path length was characterized by~\cite{Chen}. More recently, PORTs again caught researchers' attention since its evolutionary characteristic coincides with a network property of great interest in the statistical community---{\em preferential attachment}~\cite{Barabasi}. PORTs are a special class of preferential attachment networks, of which the network index equals one; i.e., the newcomer is connected with only one parent node at each point. The joint asymptotic distribution of the numbers of nodes of different {\em outdegrees}\footnote{The outdegree of a node is the number of edges emanating out of the node.} in PORTs was shown to be normal by~\cite{Drmota, Janson}. Several other limiting results for PORTs were presented in~\cite{Hwang}. 

The rest of this paper is organized as follows. We begin with introducing some notations and preliminaries in Section~\ref{Sec:notations}. In Section~\ref{Sec:degreedist}, we determine the exact distribution of the degree of a node with a fixed label. More specifically, we develop the probability mass function via an elementary approach---two-dimensional induction---in Section~\ref{Sec:pmf} , and calculate its moments by exploiting a \Polya\ urn model in Section~\ref{Sec:moments}. In Section~\ref{Sec:Zagreb}, we look into the Zagreb index for this class of trees. This section is split into two parts. In Section~\ref{Sec:rec}, we compute the mean and variance of the Zagreb index of PORTs via recurrence methods, while in Section~\ref{Sec:normal}, we study the asymptotic distribution of the Zagreb index and conjecture that it is not normal. In Section~\ref{Sec:Pois}, we investigate the degree profile of PORTs embedded into continuous time. We find that the asymptotic distribution of the degree variable under the Poissonization framework is exponential. Lastly, we address some concluding remarks and propose some future work in Section~\ref{Sec:concl}.

\section{Notations and preliminaries}
\label{Sec:notations}
Let $D_{n, j}$ be the degree of the node with label $j$  in $T_n$, for $1 \le j \le n$. Let $\field_n$ denote the $\sigma$-field generated by the first $n$ stages of $T_n$. Many results in this paper are given in terms of {\em gamma functions}, $\Gamma(\cdot)$; see a classic text~\cite[page 47]{Feller} for its definition and fundamental properties. For a nonnegative integer $z$, the {\em double factorial} of $z$ is $z!! = \prod_{i = 0}^{\lceil z/2 \rceil - 1} (z - 2i),$ with the interpretation of $0!! = 1$. The {\em Pochhammer symbol} for the rising factorial is defined as 
$$\langle x \rangle_k = x (x + 1) \cdots (x + k - 1)$$
for any real $x$ and nonnegative integer $k$, with the interpretation of $\langle x \rangle_0 = 1$. The {\em Kronecker delta function} of two variables $s$ and $t$, denoted by $\delta_{s, t}$, equals $1$ for $s = t$; $0$, otherwise. The little $o$ and big $O$ notations define relations between two real-valued functions $f(x)$ and $g(x)$. We have $f(x) = o\bigl(g(x)\bigr)$ equivalent to $\lim_{x \to \infty} \bigl(f(x)/g(x)\bigr)= 0$ provided that $g(x) \neq 0$; On the other hand, $f(x) = O\bigl(g(x)\bigr)$ if there exists $M > 0$ and $x_0 \in \mathbb{R}$ such that $|f(x)| \le M |g(x)|$ for all $x \ge x_0$. {\em Generalized hypergeometric functions} are defined in terms of Pochhammer symbols of rising factorials; that is,
$$\Hypergeometric{p}{q}{a}{b}{z} = \sum_{s = 0}^{\infty}\frac{\langle a_1 \rangle_s \cdots \langle a_p \rangle_s}{\langle b_1 \rangle_s \cdots \langle b_q \rangle_s} \frac{z^s}{s!}.$$

Much of the study in this paper relies on an extensively-studied probabilistic model---{\em \Polya\ urn model}. We give quick words about \Polya\ urns. A two-color \Polya\ urn scheme is an urn containing balls of two different colors (say white and blue). At each point of discrete time, we draw a ball from the urn at random, observe its color and put it back in the urn, then execute some ball additions (or removals) according to predesignated rules: If the ball withdrawn is white, we add $a$ white balls and $b$ blue balls; Otherwise, the ball withdrawn is blue, in which case we add $c$ white balls and $d$ blue balls. The dynamics of the urn can thus be represented by the following {\em replacement matrix}
$$\begin{pmatrix}
a & b
\\ c & d
\end{pmatrix},$$
in which the rows from top to bottom are indexed by white and blue, and
the columns from left to right are also indexed by white and blue. We refer the interested readers to~\cite{Mahmoud2009} for a text-style elaboration of \Polya\ urns.

\section{Degree distribution}
\label{Sec:degreedist}
In this section, we investigate the degree profile of PORTs, i.e, the distribution of the degree variable $D_{n, j}$ for a fixed $1 \le j \le n$. First, we determine the distribution of $D_{n, j}$ by developing the exact expression of its probability mass function. Next, we characterize its behavior by looking into the first two moments. 
\subsection{Probability mass function}
\label{Sec:pmf}
To determine the probability mass function of $D_{n, j}$, we1 separate the cases of $\{j \ge 2\}$ and $\{j = 1\}$ for clarity. When $j = 1$, the random variable $D_{n, j} = D_{n, 1}$ refers to the degree of the root of $T_n$. The root is the originator of the tree, so it has no parent. The root is the only node in the tree that has {\em indegree}\footnote{The indegree of a node is the number of edges heading into the node} $0$.
\begin{prop}
	\label{Prop:degreej}
	For a fixed $2 \le j \le n$, we have
	\begin{equation}
	\label{Eq:degreej}
	\Prob(D_{n, j} = d) = \frac{\Gamma(d)\Gamma\left(j - \frac{1}{2}\right)\sum_{i = 0}^{d - 1} \frac{(-1)^i \Gamma\left(n - 1 - \frac{i}{2}\right)}{\Gamma(i + 1)\Gamma(d - i)\Gamma\left(j - 1 - \frac{i}{2}\right)}}{\Gamma\left(n - \frac{1}{2}\right)},
	\end{equation}
	for $d = 1, 2, \ldots, n - j + 1$.
\end{prop}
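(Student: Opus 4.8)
The plan is to reduce the statement to a one-step recurrence for the probability mass function and then verify the closed form by induction on the pair $(n,d)$. First I would record the transition dynamics. Since the node $j \ge 2$ enters at time $j$ with degree $1$, and at every later step the tree $T_{n-1}$ carries total weight $2n-3$ (each non-root node contributes weight equal to its degree, and the root contributes one extra unit), the newcomer labeled $n$ attaches to node $j$ with conditional probability $D_{n-1,j}/(2n-3)$. Writing $p_n(d) = \Prob(D_{n,j} = d)$ and conditioning on $\field_{n-1}$, this gives
\begin{equation*}
p_n(d) = \frac{d-1}{2n-3}\, p_{n-1}(d-1) + \left(1 - \frac{d}{2n-3}\right) p_{n-1}(d),
\end{equation*}
valid for all $d \ge 1$ and $n > j$, with initial condition $p_j(1) = 1$ and $p_j(d) = 0$ for $d \ge 2$.

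Next I would denote the right-hand side of~\eqref{Eq:degreej} by $f(n,d)$ and prove $p_n(d) = f(n,d)$ by induction on $n$, treating all values of $d$ at once (this is the two-dimensional induction). For the base case $n = j$, the ratio $\Gamma(j-1/2)/\Gamma(n-1/2)$ collapses to $1$ and the $i$-dependent gamma factors cancel, so $f(j,d)$ reduces to $\Gamma(d)\sum_{i=0}^{d-1} (-1)^i / \bigl(\Gamma(i+1)\Gamma(d-i)\bigr)$. Rewriting the summand as a binomial coefficient turns this into $\sum_{i=0}^{d-1}(-1)^i \binom{d-1}{i} = (1-1)^{d-1}$, which equals $1$ when $d=1$ and $0$ when $d \ge 2$, matching the initial condition exactly.

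For the inductive step I would substitute the closed form at level $n-1$ into the recurrence and show it reproduces $f(n,d)$. Using $\Gamma(n-1/2)/\Gamma(n-3/2) = (2n-3)/2$ to clear the prefactors, the $(2n-3)$ denominators cancel, and after applying $\Gamma(n-1-i/2) = (n-2-i/2)\Gamma(n-2-i/2)$ the residual coefficient of the $i$-th term simplifies to $(d-i-1)/2$, so that the identity collapses term by term in $i$ to the single relation
\begin{equation*}
\frac{d-i-1}{\Gamma(d-i)} = \frac{1}{\Gamma(d-i-1)},
\end{equation*}
which is immediate from $\Gamma(d-i) = (d-i-1)\Gamma(d-i-1)$. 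Closing the induction then yields the claim, the stated range $1 \le d \le n-j+1$ reflecting that node $j$ can gain at most one new neighbour per step after its insertion.

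I expect the main obstacle to be purely bookkeeping: carrying the half-integer gamma arguments through the inductive step and checking that the two sums—one over $0 \le i \le d-1$ coming from the $p_{n-1}(d)$ term and one over $0 \le i \le d-2$ coming from the $p_{n-1}(d-1)$ term—align index-for-index after the shift $d \mapsto d-1$. Some care is also needed at the two boundaries $d=1$ and $d = n-j+1$, but in each case the offending contribution is annihilated by an explicit factor of $d-1$ or $d-i-1$, so no separate argument is required.
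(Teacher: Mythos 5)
Your proof is correct and follows essentially the same route as the paper: the same one-step recurrence $p_n(d) = \frac{d-1}{2n-3}\,p_{n-1}(d-1) + \frac{2n-3-d}{2n-3}\,p_{n-1}(d)$ with normalizer $2n-3$, verified against the closed form by induction, with the same gamma-function cancellations ($\Gamma(d-i) = (d-i-1)\Gamma(d-i-1)$ and $\Gamma(n-1-i/2) = (n-2-i/2)\Gamma(n-2-i/2)$) driving the inductive step. The only difference is in the basis: the paper seeds the two-dimensional induction with the first column $d=1$ and the diagonal $d=n-j+1$ computed probabilistically as explicit products, whereas you seed it with the single row $n=j$, where the sum telescopes to $(1-1)^{d-1}=\delta_{d,1}$, and let the algebraic identity propagate everything including the vanishing outside the support --- a slightly tidier handling of the boundaries, but not a different argument.
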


\begin{proof}
	We prove the result in this proposition by a two-dimensional induction on $n \ge j$ and $d \ge 1$. The proof progresses in the style of filling an infinite lower triangular table, in which the rows are indexed by $n$ and the columns are indexed by $d$. A (similar) graphic interpretation of the method can be found in~\cite[page 69]{Zhang}. We initialize the first column and the diagonal of the table to be the basis of the induction. The event of $\{D_{n, j} = 1\}$ for all $n \ge j$ is that the node labeled with $j$ is never chosen as a parent for any newcomer since its first appearance in the tree till time $n$. Thus, we have
	$$\Prob(D_{n, j} = 1) = \frac{2j - 2}{2j - 1} \times \frac{2j}{2j + 1} \times \cdots \times \frac{2n - 4}{2n - 3} = \frac{\Gamma(n - 1)\Gamma\left(j - \frac{1}{2}\right)}{\Gamma\left(n - \frac{1}{2}\right)\Gamma(j - 1)}.$$
	On the other hand, the event of $\{D_{n, j} = n - j + 1\}$ for all $n \ge j$ is that the node labeled with $j$ is selected as parents for newcomers at all points from $j + 1$ to $n$. It follows that
	\begin{align*}
	\Prob(D_{n, j} = n - j + 1) &= \frac{1}{2j - 1} \times \frac{2}{2j + 1} \times \cdots \times \frac{n - j}{2n - 3} 
	\\ &= \frac{\Gamma(n - j + 1) \Gamma\left(j - \frac{1}{2}\right)}{2^{n - j} \Gamma\left(n - \frac{1}{2}\right)}.
	\end{align*}
	
	Then, we assume that Equation~(\ref{Eq:degreej}) holds for all $d$ up to row $(n - 1)$ in the table. Noticing that the degree of the node labeled with $j$ increases at most by one at each point, we have
	\begingroup
	\allowdisplaybreaks
	\begin{align*}
	\Prob(D_{n, j} = d) &= \frac{d - 1}{2n - 3} \Prob(D_{n - 1, j} = d - 1) + \frac{2n - 3 - d}{2n - 3} \Prob(D_{n - 1, j} = d)
	\\ &= \frac{d - 1}{2n - 3}\frac{\Gamma(d - 1)\Gamma\left(j - \frac{1}{2}\right)\sum_{i = 0}^{d - 2} \frac{(-1)^i \Gamma\left(n - 2 - \frac{i}{2}\right)}{\Gamma(i + 1)\Gamma(d - 1 - i)\Gamma\left(j - 1 - \frac{i}{2}\right)}}{\Gamma\left(n - \frac{3}{2}\right)}
	\\ &\qquad{}+ \frac{2n - 3 - d}{2n - 3}\frac{\Gamma(d)\Gamma\left(j - \frac{1}{2}\right)\sum_{i = 0}^{d - 1} \frac{(-1)^i \Gamma\left(n - 2 - \frac{i}{2}\right)}{\Gamma(i + 1)\Gamma(d - i)\Gamma\left(j - 1 - \frac{i}{2}\right)}}{\Gamma\left(n - \frac{3}{2}\right)}
	\\ &= \frac{\Gamma(d)\Gamma\left(j - \frac{1}{2}\right)}{\Gamma\left(n - \frac{1}{2}\right)}\left[\frac{1}{2}\sum_{i = 0}^{d - 2} \frac{(-1)^i \Gamma\left(n - 2 - \frac{i}{2}\right)}{\Gamma(i + 1)\Gamma(d - 1 - i)\Gamma\left(j - 1 - \frac{i}{2}\right)}\right.
	\\ &\qquad{}+\left(n - \frac{d}{2} - \frac{3}{2}\right)\left.\sum_{i = 0}^{d - 1} \frac{(-1)^i \Gamma\left(n - 2 - \frac{i}{2}\right)}{\Gamma(i + 1)\Gamma(d - i)\Gamma\left(j - 1 - \frac{i}{2}\right)}\right]
	\\ &= \frac{\Gamma(d)\Gamma\left(j - \frac{1}{2}\right)}{\Gamma\left(n - \frac{1}{2}\right)}\left[\sum_{i = 0}^{d - 2} \left(n - 2 - \frac{i}{2}\right) \frac{(-1)^i \Gamma\left(n - 2 - \frac{i}{2}\right)}{\Gamma(i + 1)\Gamma(d - i)\Gamma\left(j - 1 - \frac{i}{2}\right)}\right.
	\\ &\qquad{}+\left(n - \frac{d}{2} - \frac{3}{2}\right)\left.\frac{(-1)^{d - 1} \Gamma\left(n - \frac{d}{2} - \frac{3}{2}\right)}{\Gamma(d)\Gamma\left(j - \frac{d}{2} - \frac{3}{2}\right)}\right].
	\end{align*}
	\endgroup
	This is equivalent to Equation~(\ref{Eq:degreej}) stated in the proposition.
\end{proof}

The probability mass function of $D_{n, j}$ is given by the sum of an alternating sequence. We split the total sum into two parts: a partial sums of odd indices and a partial sum of even indices. We then respectively evaluate the two partial sums to obtain an alternative expression of the probability mass function of $D_{n, j}$. The result is given in terms of generalized hypergeometric functions, presented in the next corollary.
\begin{corollary}
	\label{Cor:altdegree}
	For a fixed $2 \le j \le n$, we have
	\begin{align*}
	\Prob(D_{n, j} = d) &= \frac{\Gamma(d)\Gamma\left(j - \frac{1}{2}\right)}{\Gamma\left(n - \frac{1}{2}\right)} \left(\frac{\Gamma(n - 1) \Hypergeometric{3}{2}{\frac{2 - d}{2}, \frac{1 - d}{2}, 2 - j}{\frac{1}{2}, 2 - n}{1}}{\Gamma(d) \Gamma(j - 1)}\right.
	\\ &\qquad{}- \left. \frac{\Gamma(n - \frac{3}{2}) \Hypergeometric{3}{2}{\frac{3 - d}{2}, \frac{2 - d}{2}, \frac{5}{2} - j}{\frac{3}{2}, \frac{5}{2} - n}{1}}{\Gamma(d - 1) \Gamma \left(j - \frac{3}{2} \right)} \right).
	\end{align*}
\end{corollary}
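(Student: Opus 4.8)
The plan is to start directly from the probability mass function in Proposition~\ref{Prop:degreej} and isolate the alternating sum $S := \sum_{i=0}^{d-1} \frac{(-1)^i \Gamma(n-1-\frac{i}{2})}{\Gamma(i+1)\Gamma(d-i)\Gamma(j-1-\frac{i}{2})}$, leaving the common prefactor $\Gamma(d)\Gamma(j-\frac12)/\Gamma(n-\frac12)$ untouched. The single obstruction to recognizing $S$ as a hypergeometric series is the half-integer shift $i/2$ appearing in two of the Gamma arguments: as $i$ increases by one these arguments move by only $\frac12$, which is incompatible with the integer-step ratio structure of a ${}_pF_q$. I would remove this by splitting $S$ according to the parity of $i$, writing $S = S_{\mathrm{even}} + S_{\mathrm{odd}}$ with $i = 2s$ in the first sum and $i = 2s+1$ in the second. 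After this substitution the arguments $n-1-\frac{i}{2}$ and $j-1-\frac{i}{2}$ become $n-1-s$ (resp.\ $n-\frac32-s$) and $j-1-s$ (resp.\ $j-\frac32-s$), which now advance in integer steps, so each of $S_{\mathrm{even}}$ and $S_{\mathrm{odd}}$ is a candidate hypergeometric term.

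For each partial sum I would factor out its $s=0$ term and form the ratio of consecutive summands. Using the elementary identities $\Gamma(a-s) = (-1)^s \Gamma(a)/\langle 1-a\rangle_s$ and the rewritings $(2s+1)(2s+2) = 4(s+\frac12)(s+1)$ and $(2s+2)(2s+3) = 4(s+1)(s+\frac32)$, together with the fact that each descending factor such as $d-2s-1$ can be written as $-2\bigl(s+\frac{1-d}{2}\bigr)$, the ratio collapses to a quotient of shifted linear factors of the form $\frac{(s+a_1)(s+a_2)(s+a_3)}{(s+b_1)(s+b_2)(s+1)}$ with argument $z=1$; the factors of $4$ and the two sign changes cancel cleanly. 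Reading off the parameters gives, for the even part, upper parameters $\frac{1-d}{2},\frac{2-d}{2},2-j$ and lower parameters $\frac12,2-n$, with leading coefficient $\Gamma(n-1)/\bigl(\Gamma(d)\Gamma(j-1)\bigr)$; for the odd part it gives upper parameters $\frac{2-d}{2},\frac{3-d}{2},\frac52-j$ and lower parameters $\frac32,\frac52-n$, with leading coefficient $-\Gamma(n-\frac32)/\bigl(\Gamma(d-1)\Gamma(j-\frac32)\bigr)$.

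Multiplying $S_{\mathrm{even}} + S_{\mathrm{odd}}$ back by the common prefactor $\Gamma(d)\Gamma(j-\frac12)/\Gamma(n-\frac12)$ then reproduces the two-term expression claimed in the corollary, the minus sign coming from the $(-1)^i$ evaluated on odd indices. The step I expect to demand the most care is the bookkeeping of signs and the factors of $4$ generated by the $2s$ and $2s+1$ substitutions, since an error there shifts a parameter by $\frac12$ and is easy to miss. A second point to verify, rather than a genuine difficulty, is that the two ${}_3F_2$ series terminate at exactly the right place: the numerator parameters $\frac{1-d}{2},\frac{2-d}{2}$ (resp.\ $\frac{2-d}{2},\frac{3-d}{2}$) include a nonpositive (half-)integer that forces termination precisely at $s = \lfloor (d-1)/2\rfloor$, which matches the original summation range $0 \le i \le d-1$, so no spurious terms are introduced and the finite identity is exact.
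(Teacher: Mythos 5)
Your proposal is correct and follows essentially the same route as the paper, which likewise obtains the corollary by splitting the alternating sum in Proposition~\ref{Prop:degreej} into even- and odd-index partial sums and recognizing each as a terminating $_3F_2$ at unit argument; your parameter and leading-coefficient bookkeeping checks out against the stated result. The only microscopic slip is that the odd-index series terminates at $s=\lfloor (d-2)/2\rfloor$ rather than $\lfloor (d-1)/2\rfloor$, which does not affect the argument.
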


The first generalized hypergeometric function (on the top row) in the result stated in Corollary~\ref{Cor:altdegree} can be further simplified for small choices of $j$. We present the probability mass functions of $D_{n, j}$ for $j = 2, 3$ as examples:
\begin{align*}
\Prob(D_{n, 2} = d) &= \frac{1}{(2n - 3) \Gamma \left(n - \frac{1}{2}\right)} \left[\sqrt{\pi}\left(n - \frac{3}{2}\right)\Gamma(n - 1)  \right.
\\ &\qquad{}- \left. (d - 1)\Gamma \left(n - \frac{1}{2}\right)\Hypergeometric{3}{2}{\frac{3 - d}{2}, \frac{2 - d}{2}, \frac{1}{2}}{\frac{3}{2}, \frac{5}{2} - n}{1} \right];
\\ \Prob(D_{n, 3} = d) &= \frac{3}{(2n - 3) \Gamma \left(n - \frac{1}{2}\right)} \left[\sqrt{\pi}\left(n - \frac{3}{2}\right) \frac{d^2 - 3d + 2n - 2}{4}\Gamma(n - 2)  \right.
\\ &\qquad{}- \left. (d - 1)\Gamma \left(n - \frac{1}{2}\right)\Hypergeometric{3}{2}{\frac{3 - d}{2}, \frac{2 - d}{2}, -\frac{1}{2}}{\frac{3}{2}, \frac{5}{2} - n}{1} \right].
\end{align*}
Simplifications for the probability mass function of $D_{n, j}$ for higher values of $j$ are also available, done in a similar manner.

Next, we look at the degree distribution of the root of a PORT. For $j = 1$, the probability mass function of $D_{n, j}$ (i.e., $D_{n, 1}$) cannot be directly derived from Equation~(\ref{Eq:degreej}). Notice that a main different difference between the root and other nodes is that the root has indegree $0$, while each of the other nodes has indegree $1$. Thus, we can tweak Equation~(\ref{Eq:degreej}) by substituting $d$ by $d + 1$, and then letting $j = 1$. Under such setting, we find that the probability mass function of $D_{n, 1}$ can be substantially simplified to the following neat and closed form. 
\begin{prop}
	\label{Thm:degree1}
	The probability mass function of the root of a PORT is
	\begin{equation}
	\label{Eq:degree1}
	\Prob(D_{n, 1} = d) = \frac{d(2n - d - 3)!}{2^{n - d - 1} (n - d - 1)! (2n - 3)!!},
	\end{equation}
	for $d = 1, 2, \ldots, n - 1$.
\end{prop}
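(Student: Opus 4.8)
The plan is to exploit the reduction indicated just before the statement: the root differs from every other node only in that it carries indegree $0$, so its degree coincides with its outdegree. In the plane-oriented insertion scheme a node with outdegree $k$ offers $k+1$ admissible insertion slots, so the root's attachment weight when it has degree $d$ is $d+1$ rather than $d$. Counting slots in $T_{n-1}$ gives $(n-2)+(n-1)=2n-3$ in total, which is exactly the normalization appearing in the proof of Proposition~\ref{Prop:degreej}. Tracking the degree of the root across the insertion of node $n$ therefore yields the one-step recurrence
\[
\Prob(D_{n,1}=d)=\frac{d}{2n-3}\,\Prob(D_{n-1,1}=d-1)+\frac{2n-4-d}{2n-3}\,\Prob(D_{n-1,1}=d).
\]
The first thing I would record is that this is precisely the recurrence established for $D_{n,j}$ with $d$ replaced by $d+1$ and $j$ set to $1$; since Equation~(\ref{Eq:degreej}) solves that recurrence as an identity in Gamma functions, the substitution $d\mapsto d+1$, $j\mapsto 1$ is legitimate and produces an expression (still a finite sum) for $\Prob(D_{n,1}=d)$, once the base case $\Prob(D_{2,1}=1)=1$ and the boundary columns are matched.

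Next I would carry out that substitution inside the explicit formula. The decisive simplification is that setting $j=1$ turns the denominator factor $\Gamma(j-1-\tfrac{i}{2})$ into $\Gamma(-\tfrac{i}{2})$, whose reciprocal vanishes whenever $i$ is a nonnegative even integer because the Gamma function has poles there. Consequently every even-indexed term in the alternating sum drops out, and only the indices $i=2k+1$ survive. After inserting $\Gamma(\tfrac12)=\sqrt{\pi}$ and the standard evaluation of $\Gamma(-k-\tfrac12)$ in terms of double factorials, the sum over $i$ collapses to a single sum over $k$ with positive summands.

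Evaluating that residual sum is where I expect the real work to lie. In principle it is a terminating hypergeometric-type series that should close up by a Gauss/Chu--Vandermonde summation; but rather than fight the hypergeometric identity, I would prefer to short-circuit the computation by verifying the claimed closed form
\[
P_{n,d}:=\frac{d\,(2n-d-3)!}{2^{\,n-d-1}\,(n-d-1)!\,(2n-3)!!}
\]
directly against the root recurrence by induction on $n$. Writing $(2n-3)!!=(2n-3)(2n-5)!!$ to clear the common normalizer, substituting $P_{n-1,d-1}$ and $P_{n-1,d}$, and dividing out the factor $(2n-d-5)!$ reduces the inductive step to the polynomial identity
\[
(d-1)(2n-d-4)+2(n-d-1)(2n-d-4)=(2n-d-3)(2n-d-4),
\]
whose two sides share the factor $(2n-d-4)$ and agree after a one-line computation. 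The base case $n=2$ is immediate, and the two boundary degrees $d=1$ and $d=n-1$ are handled automatically, since $P_{n-1,0}$ vanishes through its leading factor $d=0$ while $P_{n-1,n-1}$ vanishes through the reciprocal factorial $1/(n-d-2)!=1/(-1)!$.

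The main obstacle, then, is the clean collapse of the surviving alternating sum into a single term; the inductive verification circumvents it, at the cost of first pinning down the correct root recurrence (in particular the extra $+1$ in the root's attachment weight) and of trusting the factorial conventions to kill the boundary contributions.
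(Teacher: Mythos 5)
Your proof is correct, but it reaches the closed form by a genuinely different route than the paper. The paper substitutes $j=1$, $d\mapsto d+1$ into Equation~(\ref{Eq:degreej}) and then actually \emph{evaluates} the resulting alternating sum, splitting it into even- and odd-indexed parts and invoking Euler's reflection formula for the gamma function to collapse it into the double-factorial expression. You instead set up the root's one-step transition
\[
\Prob(D_{n,1}=d)=\frac{d}{2n-3}\,\Prob(D_{n-1,1}=d-1)+\frac{2n-4-d}{2n-3}\,\Prob(D_{n-1,1}=d),
\]
which you derive correctly from the gap count (the root with degree $d$ carries $d+1$ insertion slots out of the $2n-3$ in $T_{n-1}$), and then verify the stated formula by induction on $n$; your reduction to the identity $(d-1)(2n-d-4)+2(n-d-1)(2n-d-4)=(2n-d-3)(2n-d-4)$ checks out, as do the base case and the vanishing of the boundary terms $P_{n-1,0}$ and $P_{n-1,n-1}$ under the usual factorial conventions. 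Your preliminary observation that at $j=1$ the even-indexed terms of the sum die because $1/\Gamma(-i/2)=0$ at the poles is also correct, though your argument never actually needs it. The trade-off is clear: the paper's computation \emph{derives} the closed form from Proposition~\ref{Prop:degreej} without knowing it in advance, at the cost of nontrivial gamma-function manipulation; your verification is more elementary and self-contained (it does not even need Proposition~\ref{Prop:degreej}), but it only works because the answer is already supplied in the statement.
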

\begin{proof}
	Recall Equation~(\ref{Eq:degreej}), and set $j = 1$. Replacing $d$ with $d + 1$ in the equation, we have
	$$\Prob(D_{n, 1} = d) = \frac{\Gamma(d + 1)\Gamma\left(\frac{1}{2}\right)\sum_{i = 0}^{d} \frac{(-1)^i \Gamma\left(n - 1 - \frac{i}{2}\right)}{\Gamma(i + 1)\Gamma(d + 1 - i)\Gamma\left(j - 1 - \frac{i}{2}\right)}}{\Gamma\left(n - \frac{1}{2}\right)}.$$
	Reimplementing the strategy of writing the total sum into partial sums with respect to odd indicies and even indicies, we apply the {\em Euler's reflection formula} to gamma functions, and obtain
	\begingroup
	\allowdisplaybreaks
	\begin{align*}
	\\\Prob(D_{n, 1} = d)&= \frac{\Gamma(d + 1)\Gamma\left(\frac{1}{2}\right)}{\Gamma\left(n - \frac{1}{2}\right)}\left(\sum_{{i \tiny{\mbox{ is even}}} \atop {0 \le i \le d}}\frac{\Gamma\left(n - 1 - \frac{i}{2}\right)}{\Gamma(i + 1)\Gamma(d + 1 - i)\Gamma\left(j - 1 - \frac{i}{2}\right)}\right.
	\\&\qquad{} - \left. \sum_{{i \tiny{\mbox{ is odd}}} \atop {0 \le i \le d}}\frac{\Gamma\left(n - 1 - \frac{i}{2}\right)}{\Gamma(i + 1)\Gamma(d + 1 - i)\Gamma\left(j - 1 - \frac{i}{2}\right)}\right)
	\\&= \frac{\Gamma(d + 1)\Gamma\left(\frac{1}{2}\right)}{\Gamma\left(n - \frac{1}{2}\right)} \frac{2^{d + 1} \Gamma(d + 1 - n)}{4^n \Gamma(d) \Gamma(d + 3 - 2n) \cos(n\pi)}
	\\ &= \frac{d (2n - d - 3) \Gamma(2n - d - 3) 2^n}{2^{2n - d - 1} (n - d - 1) \Gamma(n  - d - 1) (2n - 3)!!}
	\\ &= \frac{d(2n - d - 3)!}{2^{n - d - 1} (n - d - 1)! (2n - 3)!!}.
	\end{align*}
	\endgroup
\end{proof}

The probability mass function of $D_{n, 1}$ in Proposition~\ref{Thm:degree1} agrees with that derived in~\cite{ZhangANALCO}. The proof in~\cite{ZhangANALCO} requires massive algebraic computations and simplifications, so the proof given in this paper appears much more concise and succinct. 

\subsection{Moments}
\label{Sec:moments}
In general, the probability mass function (c.f.\ Equation~(\ref{Eq:degreej})) is unwieldy for moment computations. Alternatively, we appeal to a two-color {\em \Polya\ urn model}~\cite{Mahmoud2009} to calculate the mean and variance of $D_{j, n}$. Imagine that there is an urn containing balls of two colors (white and blue). Let $W_n$ be the degree of the node labeled with $j$ (white balls) at time $n \ge j$, and $B_n$ be the total degree of all the other nodes (blue balls). At time $(n + 1)$, if the node labeled with $j$ is selected, $W_n$ increases by one, and $B_n$ also increases by one, which is contributed by the edge incident to the node labeled with $(n + 1)$; if any other node is selected, $B_n$ increases by two. This dynamic can be represented by the following {\em replacement matrix}
\begin{equation}
\label{Mat:replacement}
\begin{pmatrix}
1 & 1
\\	0 & 2
\end{pmatrix}.
\end{equation}
This \Polya\ urn scheme appropriately interprets the mechanism of preferential attachment, as, upon the insertion at time point $n + 1$, the probability of the node labeled with $j$ being selected is exactly $W_n/(W_n + B_n)$. Another equivalent approach to modeling the dynamics of degree change is to employ an {\em extended} PORT. The basic idea is to fill all the gaps in the original tree with {\em external} nodes, which represent insertion positions. We omit the details in this section, but will revisit this strategy in the sequel. 

The replacement matrix (c.f.\ Matrix~(\ref{Mat:replacement})) is triangular, so the \Polya\ urn associated with this kind of replacement matrix is called {\em triangular \Polya\ urn}. Triangular urns are well studied, and the moments of white balls are explicitly characterized in~\cite[Theorem 3.1]{Zhang2015}. We exploit those results to get the following proposition.
\begin{prop}
	\label{Prop:momdegreej}
	For a fixed $1 \le j \le n$ and $n \ge 2$, we have
	\begin{align*}
	\E[D_{n, j}] &= \frac{\Gamma(n)\Gamma\left(j - \frac{1}{2}\right)}{\Gamma\left(n - \frac{1}{2}\right)\Gamma(j)} - \delta_{j, 1},
	\\ \Var[D_{n, j}] &= -\frac{\Gamma^2(n)\Gamma^2\left(j - \frac{1}{2} \right)}{\Gamma^2\left(n - \frac{1}{2}\right) \Gamma^2(j)} - \frac{\Gamma(n)\Gamma\left(j - \frac{1}{2}\right)}{\Gamma\left(n - \frac{1}{2}\right)\Gamma(j)} + \frac{4n - 2}{2j - 1}.
	\end{align*}
\end{prop}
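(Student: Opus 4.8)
The plan is to read both moments off the triangular \Polya\ urn associated with Matrix~(\ref{Mat:replacement}), reducing the whole computation to an identification of parameters followed by a gamma-function simplification. First I would pin down the urn precisely. The process starts at time $n = j$, when the node labeled with $j$ is a freshly inserted leaf, so the white count satisfies $W_j = 1$; the correct normalization is the total number of admissible insertion slots, which is $2n - 1$ in $T_n$ (the total degree augmented by the root's extra slot, as in the extended PORT), and this grows by the balance $a + b = c + d = 2$ at each step. The subtlety I must respect is that for a non-root node the number of slots equals its degree, so $W_n = D_{n, j}$ when $j \ge 2$, whereas the root carries one extra slot, so $W_n = D_{n, 1} + 1$ when $j = 1$. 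This single discrepancy is exactly what will produce the correction $-\delta_{j, 1}$ in the mean.

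For the first moment I would invoke~\cite[Theorem 3.1]{Zhang2015} directly or, equivalently, solve the one-step recurrence $\E[W_{n + 1} \given \field_n] = W_n + W_n/(2n - 1)$, that is, $\E[W_{n + 1}] = \frac{2n}{2n - 1}\E[W_n]$. Telescoping from $W_j = 1$ gives $\E[W_n] = \prod_{k = j}^{n - 1} \frac{2k}{2k - 1} = \frac{\Gamma(n)\Gamma(j - 1/2)}{\Gamma(n - 1/2)\Gamma(j)}$ after rewriting the rising factorials as ratios of gamma functions. For $j \ge 2$ this is $\E[D_{n, j}]$; for $j = 1$ I subtract the constant offset $W_n - D_{n, 1} = 1$, which yields precisely the $-\delta_{j, 1}$ term.

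For the variance I would compute the second moment from the conditional expansion $\E[W_{n + 1}^2 \given \field_n] = \frac{2n + 1}{2n - 1} W_n^2 + \frac{1}{2n - 1} W_n$, giving the linear recurrence $\E[W_{n + 1}^2] = \frac{2n + 1}{2n - 1}\E[W_n^2] + \frac{1}{2n - 1}\E[W_n]$ with the first moment already in hand. I expect this to collapse to the clean closed form $\E[W_n^2] = \frac{2(2n - 1)}{2j - 1} - \E[W_n]$, which I would confirm by induction (the base case $n = j$ reads $1 = 2 - 1$). Subtracting $(\E[W_n])^2$ then delivers the stated variance, with the squared gamma-ratio term, the linear gamma-ratio term, and the rational term $\frac{4n - 2}{2j - 1}$. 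Since the root correction is an additive constant, it leaves the variance unchanged, so the same expression holds verbatim at $j = 1$.

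The hard part will not be any single estimate but the bookkeeping around the normalization and the root: ensuring the denominator is $2n - 1$ rather than the total degree $2n - 2$, carrying the extra slot of the root through to the $\delta_{j, 1}$, and matching Matrix~(\ref{Mat:replacement}) together with the initial vector $(W_j, B_j) = (1, 2j - 2)$ to the hypotheses of~\cite[Theorem 3.1]{Zhang2015}. The remaining effort is the gamma-function algebra required to recognize the telescoped product and the solved recurrence in the compact form stated in the proposition.
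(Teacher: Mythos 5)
Your proposal is correct and follows essentially the same route as the paper: the paper models $D_{n,j}$ by the two-color triangular \Polya\ urn with replacement matrix~(\ref{Mat:replacement}) and reads both moments off \cite[Theorem 3.1]{Zhang2015}, which is exactly the computation you carry out by hand via the one-step recurrences for $\E[W_n]$ and $\E[W_n^2]$. If anything you are more careful than the paper's prose, which describes the urn as counting degrees (total $2n-2$) rather than insertion slots (total $2n-1$) and does not spell out the root's extra slot that produces the $-\delta_{j,1}$; your normalization is the one that actually yields the stated formulas.
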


We discover that when $n$ is large, both $\E[D_{n, j}]$ and $\Var[D_{n, j}]$ experience phase transitions. To compute the asymptotic expectation and variance, we apply the {\em Stirling's approximation} to the expectation and variance of $D_{n, j}$ in Proposition~\ref{Prop:momdegreej}. As $n \to \infty$, we have
\begin{align}
\E[D_{n, j}] &\sim \frac{\Gamma \left(j - \frac{1}{2}\right)}{\Gamma(j)} \, n^{1/2}, \label{Eq:limitexpj}
\\ \Var[D_{n, j}] &\sim \left(\frac{4}{2j - 1} - \frac{\Gamma^2\left(j - \frac{1}{2}\right)}{\Gamma^2(j)} \right) \, n - \frac{\Gamma\left(j - \frac{1}{2}\right)}{\Gamma(j)} \, n^{1/2}.\label{Eq:limitvarj}
\end{align}
We keep the second highest order term (i.e., the term that involves $n^{1/2}$) in the asymptotic variance of $D_{n, j}$ because it makes a contribution when $j$ grows in the linear phase (with respect to $n$). We reapply the Stirling's approximation to Equations~(\ref{Eq:limitexpj}) and~(\ref{Eq:limitvarj}), respectively, and obtain the next corollary.

\begin{corollary}
	\label{Cor:phase}
	As $n \to \infty$, we have
	$$\E[D_{n, j}] \sim
	\begin{cases}
	\bigl(\Gamma(j - 1/2)/\Gamma(j)\bigr) \, n^{1/2},
	\qquad &\mbox{for fixed }j,
	\\ (n / j)^{1/2}, \qquad &\mbox{for }j \to \infty,
	\end{cases}$$
	and 
	$$\Var[D_{n, j}] \sim
	\begin{cases}
	\left(\frac{4}{2j - 1} - \frac{\Gamma^2\left(j - 1/2 \right)}{\Gamma^2(j)} \right) \, n,
	\qquad &\mbox{for fixed }j,
	\\ n/j, \qquad &\mbox{for }j \to \infty, j = o(n),
	\\ 1/\theta - 1/\sqrt{\theta}, \qquad  &\mbox{for }j/n = \theta, 0 < \theta < 1.
	\end{cases}$$			
\end{corollary}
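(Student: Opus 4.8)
The plan is to treat Corollary~\ref{Cor:phase} as a purely asymptotic corollary of the two estimates in Equations~(\ref{Eq:limitexpj}) and~(\ref{Eq:limitvarj}), so that no new probabilistic input is needed: everything reduces to the behaviour of the gamma ratio $\Gamma(j - 1/2)/\Gamma(j)$ as $j$ grows. The single analytic fact I would isolate first is the standard consequence of Stirling's approximation
\[
\frac{\Gamma(j - \tfrac{1}{2})}{\Gamma(j)} = j^{-1/2}\bigl(1 + O(j^{-1})\bigr) \qquad (j \to \infty),
\]
which is the $a = -1/2$, $b = 0$ instance of $\Gamma(j + a)/\Gamma(j + b) \sim j^{a - b}$. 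The fixed-$j$ rows of both displays in the corollary are then nothing but Equations~(\ref{Eq:limitexpj}) and~(\ref{Eq:limitvarj}) verbatim, so the only work is the passage $j \to \infty$.

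For the expectation I would substitute the gamma-ratio estimate directly into Equation~(\ref{Eq:limitexpj}), giving $\E[D_{n, j}] \sim j^{-1/2} n^{1/2} = (n/j)^{1/2}$, with the error factor $1 + O(j^{-1})$ absorbed into the $\sim$ relation. The variance is slightly more delicate because Equation~(\ref{Eq:limitvarj}) is a difference of two terms of the same order. I would expand the coefficient of $n$ using $4/(2j - 1) = (2/j)\bigl(1 + O(j^{-1})\bigr)$ together with $\Gamma^2(j - 1/2)/\Gamma^2(j) = j^{-1}\bigl(1 + O(j^{-1})\bigr)$, so that
\[
\frac{4}{2j - 1} - \frac{\Gamma^2(j - \tfrac{1}{2})}{\Gamma^2(j)} = \frac{2}{j} - \frac{1}{j} + O(j^{-2}) = \frac{1}{j}\bigl(1 + O(j^{-1})\bigr).
\]
The key observation I would stress here is that the two leading $1/j$ contributions do \emph{not} cancel, so the coefficient of $n$ is genuinely of order $1/j$ rather than smaller.

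With this expansion in hand the two remaining regimes are read off by comparing orders. When $j \to \infty$ but $j = o(n)$, the leading term contributes $n/j \to \infty$ while the second term of Equation~(\ref{Eq:limitvarj}) is $\bigl(\Gamma(j - 1/2)/\Gamma(j)\bigr)\, n^{1/2} \sim (n/j)^{1/2} = o(n/j)$, so $\Var[D_{n, j}] \sim n/j$. When $j/n = \theta$ with $0 < \theta < 1$, both surviving terms are of constant order: the coefficient of $n$ yields $(1/j)\,n \to 1/\theta$ after one checks that $4n/(2j - 1) \to 2/\theta$ and $n\,\Gamma^2(j - 1/2)/\Gamma^2(j) \to 1/\theta$, while the second term gives $\bigl(\Gamma(j - 1/2)/\Gamma(j)\bigr)\, n^{1/2} \to \theta^{-1/2}$, whence $\Var[D_{n, j}] \to 1/\theta - 1/\sqrt{\theta}$. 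The main obstacle, such as it is, lies entirely in this last regime: one must retain the $n^{1/2}$ correction in Equation~(\ref{Eq:limitvarj}) because it is no longer negligible once $j$ is of linear order, and one must verify that the $O(j^{-1})$ error factors in the gamma-ratio expansions vanish in the limit rather than leaving a spurious constant. Both points are routine once the expansion above is recorded, so I expect the argument to be short.
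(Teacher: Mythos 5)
Your proposal is correct and follows essentially the same route as the paper, which obtains the corollary by reapplying Stirling's approximation (i.e., the gamma-ratio asymptotic $\Gamma(j - 1/2)/\Gamma(j) \sim j^{-1/2}$) to Equations~(\ref{Eq:limitexpj}) and~(\ref{Eq:limitvarj}) and reading off the three regimes. Your explicit check that the two $1/j$ contributions in the variance coefficient do not cancel, and that the $n^{1/2}$ correction survives only in the linear phase $j = \theta n$, is exactly the content the paper leaves implicit.
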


The formulation of $\E[D_{n, j}]$ coincides with that developed in~\cite{Szymanski}. In addition, $\Var[D_{n, j}]$ is also reported in~\cite{Szymanski}, where it is presented in terms of a sum of binomial coefficients. In this paper, we provide an alternative approach to determining $\E[D_{n, j}]$ and $\Var[D_{n, j}]$, and both of them are in neat and closed forms.

\section{Zagreb index}
\label{Sec:Zagreb}
A topological index of a graph quantifies it by turning its structure into a number. Capturing structures in numbers allows researchers to compare graphs according to certain criteria. There are many possible indices that can be constructed for static and random graphs. Each index tends to capture certain features of the graphs, such as sparseness, regularity and centrality. Examples of indices that have been introduced for random graphs include the Zagreb index~\cite{Feng2011},
the Rand\'{i}c index~\cite{Feng2010}, the Wiener index~\cite{Fuchs, Neininger}, the Gini index~\cite{Balaji, Zhang}, and a topological index measuring graph weight~\cite{Zhang2016}.

In this section, we investigate the Zagreb index for the class of PORTs. The Zagreb index was first introduced by~\cite{Gutman} in 1972. It has been a popular topological index to study molecules and complexity of selected classes of molecules in mathematical chemistry, and to model quantitative structure-property relationship (QSPR) and quantitative structure-activity relationship (QSAR) in chemoinformatics~\cite{Todeschini}.  

\subsection{Mean and variance}
\label{Sec:rec}

The {\em Zagreb index} of a graph is defined as the sum of the squared degrees of all the nodes therein. Given a PORT at time $n$, $T_n$, its Zagreb index is thus given by
$$Z_n = {\mathbf{Zagreb}}(T_n) = \sum_{j = 1}^{n} D^2_{n, j},$$
where $D_{n, j}$, again, is the degree of the node labeled with $j$ in $T_n$. Let $\indicator(n, j)$ indicate the event that the node labeled with $j$ is selected at time $n$. In the next proposition, we present the exact expectation of $Z_n$ as well as a weak law.

\begin{prop}
	\label{Prop:Zagrebmean}
	The mean of the Zagreb index of a PORT at time $n \ge 1$ is
	$$\E[Z_n] = 2(n - 1) \bigl(\Psi(n) + \gamma \bigr),$$
	where $\Psi(\cdot)$ is the {\em digamma function}, and $\gamma$ is the {\em Euler's constant}. As $n \to \infty$, we have
	$$\frac{Z_n}{n \log{n}} \convP 2.$$ 
\end{prop}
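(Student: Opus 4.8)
The plan is to prove the two claims separately, both resting on a one-step analysis of how $Z_n$ changes when the node labeled $n$ is inserted. First I would condition on $\field_{n-1}$. When node $n$ attaches to node $i$, only the summand $D_{n-1,i}^2$ of the Zagreb sum changes (to $(D_{n-1,i}+1)^2$) while the newcomer contributes a fresh $1^2$, so
$$Z_n - Z_{n-1} = \sum_{i=1}^{n-1}\indicator(n,i)\bigl(2D_{n-1,i}+1\bigr) + 1.$$
Preferential attachment gives $\E[\indicator(n,i)\mid\field_{n-1}] = D_{n-1,i}/\bigl(2(n-2)\bigr)$, the total degree of $T_{n-1}$ being $2(n-2)$; combined with $\sum_i D_{n-1,i} = 2(n-2)$ and $\sum_i D_{n-1,i}^2 = Z_{n-1}$, the conditional increment collapses to $\E[Z_n - Z_{n-1}\mid\field_{n-1}] = Z_{n-1}/(n-2) + 2$. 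Taking expectations yields the linear recurrence $\E[Z_n] = \frac{n-1}{n-2}\E[Z_{n-1}] + 2$ for $n \ge 3$.

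To solve it, I would pass to $a_n := \E[Z_n]/(n-1)$, which telescopes: $a_n = a_{n-1} + 2/(n-1)$. Starting from $Z_2 = 2$ this gives $a_n = 2H_{n-1}$ and hence $\E[Z_n] = 2(n-1)H_{n-1}$, where $H_m$ is the $m$-th harmonic number; rewriting $H_{n-1} = \Psi(n)+\gamma$ produces the stated formula. Since $H_{n-1}\sim\log n$, already $\E[Z_n]/(n\log n)\to 2$, so the weak law reduces to showing concentration, i.e.\ $\Var[Z_n] = o\bigl((n\log n)^2\bigr)$, after which Chebyshev's inequality closes the argument.

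For concentration I would work with the martingale $X_n := Z_n/(n-1) - 2H_{n-1}$ (that $\E[X_n\mid\field_{n-1}]=X_{n-1}$ is exactly the recurrence above). A short computation gives the increment $X_n - X_{n-1} = 2D_{n-1,I}/(n-1) - Z_{n-1}/\bigl((n-1)(n-2)\bigr)$, with $I$ the random recruiter, whose conditional variance is $\tfrac{4}{(n-1)^2}\Var\bigl(D_{n-1,I}\mid\field_{n-1}\bigr)$. The key quantity $\E[D_{n-1,I}^2\mid\field_{n-1}]$ equals $M_3(n-1)/\bigl(2(n-2)\bigr)$, where $M_3(n):=\sum_j D_{n,j}^3$ is the third power sum of the degrees. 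I would therefore set up, by the same one-step method, the recurrence $\E[M_3(n)] = \bigl(1+\tfrac{3}{2(n-2)}\bigr)\E[M_3(n-1)] + \tfrac{3}{2(n-2)}\E[Z_{n-1}] + 2$ and dominate it by induction to get $\E[M_3(n)] = O(n^{3/2})$.

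Feeding this back, each term $\E[(X_n-X_{n-1})^2]$ is $O\bigl(n^{-2}\cdot n^{1/2}\bigr)=O(n^{-3/2})$, so $\sum_n \E[(X_n-X_{n-1})^2]<\infty$ and $X_n$ is bounded in $L_2$; by orthogonality of martingale increments $\Var[Z_n/(n-1)] = O(1)$, i.e.\ $\Var[Z_n] = O(n^2) = o\bigl((n\log n)^2\bigr)$, and Chebyshev gives $Z_n/(n\log n)\convP 2$. The main obstacle is that the second-moment recurrence does not close: the conditional variance of the increment drags in $M_3$, which would in turn drag in $M_4$, and so on up an infinite hierarchy. I sidestep it by observing that the increments carry the weight $1/(n-1)^2$, so a mere order-of-magnitude bound $\E[M_3(n)]=O(n^{3/2})$ — obtained by crudely dominating its own recurrence rather than solving it exactly — already makes the quadratic-variation series summable, which is all the weak law requires.
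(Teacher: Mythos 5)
Your derivation of the exact mean is the same as the paper's: the one-step increment $(D_{n-1,j}+1)^2-D_{n-1,j}^2+1$, the selection probability $D_{n-1,j}/\bigl(2(n-2)\bigr)$, the recurrence $\E[Z_n]=\frac{n-1}{n-2}\E[Z_{n-1}]+2$ solved from $\E[Z_2]=2$ to give $2(n-1)\bigl(\Psi(n)+\gamma\bigr)$. Where you genuinely diverge is the weak law. The paper disposes of it in one line --- ``$\Psi(n)\sim\log n$, hence $Z_n/(n\log n)\convL 2$'' --- which, read literally, is a non sequitur: convergence of $\E[Z_n]/(n\log n)$ to $2$ does not by itself yield $L_1$ (or in-probability) convergence of the random variable; the paper is implicitly leaning on the variance bound $\Var[Z_n]=O(n^2)$ that it only establishes later (Proposition~\ref{Prop:Zagrebvar}, via the exact value of $\E[Y_n]$ in Lemma~\ref{Lem:Y}). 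You instead make the concentration step explicit and self-contained: you introduce the martingale $X_n=Z_n/(n-1)-2H_{n-1}$, compute its increment $\frac{2D_{n-1,I}}{n-1}-\frac{Z_{n-1}}{(n-1)(n-2)}$, and control the quadratic variation using only a crude order bound $\E[M_3(n)]=O(n^{3/2})$ on the third power sum (your $M_3$ is the paper's $Y_n$, and the $O(n^{3/2})$ bound is consistent with the paper's exact asymptotic $\frac{32}{\sqrt\pi}n^{3/2}$). Orthogonality of the increments then gives $\Var[Z_n]=O(n^2)=o\bigl((n\log n)^2\bigr)$ and Chebyshev finishes. This buys you a rigorous, forward-referenced-free proof of the weak law at the cost of setting up (but not solving exactly) the third-moment recurrence, which the paper has to do anyway for its second-moment computation; your observation that a domination argument suffices, because the increments carry the weight $(n-1)^{-2}$, is a nice economy over solving the recurrence exactly.
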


\begin{proof}
	Upon the insertion taking place at time point $n$, we have the following recurrence of $Z_n$ conditional on $\field_{n - 1}$ and $\indicator(n, j)$:
	\begin{equation}
	\label{Eq:Zagreb}
	Z_n = Z_{n - 1} + (D_{n - 1, j} + 1)^2 - D^2_{n - 1, j} + 1,
	\end{equation}
	where the terms $\bigl((D_{n - 1, j} + 1)^2 - D^2_{n - 1, j}\bigr)$ altogether indicate the contribution by the node labeled with $j$ (to the Zagreb index) by the degree increase, and the last term $1$ comes from the contribution by the newcomer (the node labeled with $n$). We simplify Equation~(\ref{Eq:Zagreb}) and take the expectation with respect to $\indicator(n, j)$ to get
	\begin{align*}
	\E[Z_n \given \field_{n - 1}] &= Z_{n - 1} + 2 \sum_{j = 1}^{n - 1} D_{n - 1, j} \times \Prob\bigl(\indicator(n, j)\bigr) + 2
	\\ &=  Z_{n - 1} + 2 \sum_{j = 1}^{n - 1} D_{n - 1, j} \times \frac{D_{n - 1, j}}{2(n - 2)} + 2
	\\ &=  Z_{n - 1} + \frac{\sum_{j = 1}^{n - 1} D^2_{n - 1, j}}{n - 2} + 2
	\\ &= \left(1 +\frac{1}{n - 2}\right) Z_{n - 1} + 2.
	\end{align*}
	Taking another expectation with respect to $\field_{n - 1}$, we receive a recurrence on the mean of $Z_n$, namely,
	$$\E[Z_n] = \frac{n - 1}{n - 2} \, \E[Z_{n - 1}] + 2.$$
	This recurrence is well defined for $n \ge 3$, so we can set the initial condition at $\E[Z_2] = Z_2 = 2$. Solving the recurrence, we obtain the result stated in the proposition. Notice that the result is well defined for all $n \ge 1$, albeit the developed recurrence is undefined for $n = 2$.
	
	As $n \to \infty$, we have $\Psi(n) \sim \log{n}$ asymptotically. Hence, we obtain the following convergence in $L_1$-space:
	$$\frac{Z_n}{n \log{n}} \convL 2.$$
	This convergence takes place in probability as well.
\end{proof}

Towards the computation of the second moment of $Z_n$, we consider a new topological index that is the sum of cubic degrees of nodes in a graph. Let $Y_n = \sum_{j = 1}^{n} D^3_{n, j}$ be such index of $T_n$. In the next lemma, we derive the mean of $Y_n$, and a weak law as well.

\begin{lemma}
	\label{Lem:Y}
	The mean of $Y_n$ of a PORT at time $n \ge 2$ is
	$$\E[Y_n] = \frac{32 \Gamma(n + 1/2)}{\sqrt{\pi}\Gamma(n - 1)} - 6(n - 1)\left(\Psi(n) + \gamma + \frac{8}{3}\right).$$
	As $n \to \infty$, we have
	$$\frac{Y_n}{n^{3/2}} \convP \frac{32}{\sqrt{\pi}}.$$
\end{lemma}

\begin{proof}
	We consider a recurrence for $Y_n$ conditional on $\field_{n - 1}$ and $\indicator(n, j)$, mimicking that for $Z_n$ in Equation~(\ref{Eq:Zagreb}) as follows:
	\begin{align*}
	Y_n &= Y_{n - 1} + (D_{n - 1, j} + 1)^3 - D^3_{n - 1, j} + 1
	\\ &= Y_{n - 1} + 3 D^2_{n - 1, j} + 3 D_{n - 1, j} + 2
	\end{align*}
	Taking the expectation with respect to $\indicator(n, j)$, we get
	\begingroup
	\allowdisplaybreaks
	\begin{align*}
	\E[Y_n \given \field_{n - 1}] &= Y_{n - 1} + 3\sum_{j = 1}^{n - 1} D^2_{n - 1, j} \times \frac{D_{n - 1, j}}{2(n - 2)} + 3\sum_{j = 1}^{n - 1} D_{n - 1, j} \times \frac{D_{n - 1, j}}{2(n - 2)} + 2
	\\ &= Y_{n - 1} + \frac{3}{2(n - 2)} Y_{n - 1} + \frac{3}{2(n - 2)} Z_{n - 1} + 2.
	\end{align*}
	\endgroup	
	Take another expectation with respect to $\field_{n - 1}$ and plug in the result of $\E[Z_{n - 1}]$to receive a recurrence on $\E[Y_n]$:
	$$
	\E[Y_n] = \frac{2n - 1}{2(n - 2)} \, \E[Y_{n - 1}] +  2 \bigl(\Psi(n - 1) + \gamma \bigr) + 4.
	$$
	Solving the above recurrence with initial condition $\E[Y_2]  = Y_2 = 2,$ we obtain the stated result.
	
	Towards the asymptotic of $Y_n$, we apply the Stirling's approximation to $\E[Y_n]$ to get
	$$\E[Y_n] = \frac{32}{\sqrt{\pi}} \, n^{3/2} + O(n \log{n}).$$
	Thus, we obtain an $L_1$ convergence for $Y_n/n^{3/2}$ as well as a weak law.
\end{proof}

Note that in Lemma~\ref{Lem:Y}, the expression of $\E[Y_n]$ is well defined for $n \ge 2$. As $n \to 1$, $\Gamma(n - 1)$ in the denominator of the first term approaches infinity, and $(n - 1)$ in the second term approaches $0$, rendering $\E[Y_n] \to 0$. This is consistent with the fact of $\E[Y_1] = Y_1 = 0$, as there is an isolated node in the tree. 

We are now ready to calculate the second moment of $Z_n$ as well as the variance of $Z_n$.
\begin{prop}
	\label{Prop:Zagrebvar}
	The second moment of the Zagreb index of a PORT at time $n \ge 1$ is
	$$\E \left[Z^2_n\right] = 4(n \log{n})^2 + 8 \gamma \left(n^2 \log{n}\right) + \left(16 + 4 \gamma^2 - \frac{2 \pi^2}{3}\right) n^2 + O\left(n^{3/2}\right),$$
	and the variance of $Z_n$ is
	$$\Var[Z_n] =  \left(16 - \frac{2 \pi^2}{3}\right) n^2 + O \left(n^{3/2}\right).$$
\end{prop}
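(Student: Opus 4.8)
The plan is to obtain $\E[Z_n^2]$ from a one-step recurrence and then form the variance by subtracting $(\E[Z_n])^2$, reusing the mean from Proposition~\ref{Prop:Zagrebmean} and the auxiliary cubic index $Y_n$ from Lemma~\ref{Lem:Y}. Starting from the simplified conditional form of Equation~(\ref{Eq:Zagreb}), namely $Z_n = Z_{n - 1} + 2 D_{n - 1, j} + 2$ on the event $\indicator(n, j)$, I would square both sides and take expectation with respect to $\indicator(n, j)$, recalling that the node labeled with $j$ is chosen with probability $D_{n - 1, j} / \bigl(2(n - 2)\bigr)$. Since $\sum_{j} D_{n - 1, j}^3 = Y_{n - 1}$ and $\sum_{j} D_{n - 1, j}^2 = Z_{n - 1}$, the squared-increment and cross terms convert cleanly into $Y_{n - 1}$ and $Z_{n - 1}$, yielding
\begin{equation*}
\E[Z_n^2 \given \field_{n - 1}] = \frac{n}{n - 2} Z_{n - 1}^2 + \frac{2 Y_{n - 1}}{n - 2} + \frac{4(n - 1)}{n - 2} Z_{n - 1} + 4.
\end{equation*}
A further expectation over $\field_{n - 1}$, together with the closed forms of $\E[Z_{n - 1}]$ and $\E[Y_{n - 1}]$, gives a first-order linear recurrence $\E[Z_n^2] = \frac{n}{n - 2}\,\E[Z_{n - 1}^2] + g(n)$ with an explicit forcing term $g(n)$.

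Next I would solve this recurrence by the integrating-factor method. The homogeneous multiplier telescopes to $\prod_{k = 3}^{n} \frac{k}{k - 2} = \frac{n(n - 1)}{2}$, so with the initial value $\E[Z_2^2] = 4$ one obtains
\begin{equation*}
\E[Z_n^2] = \frac{n(n - 1)}{2}\left(4 + \sum_{k = 3}^{n} \frac{2\, g(k)}{k(k - 1)}\right).
\end{equation*}
The summand splits into three pieces. Writing $\Psi(k - 1) + \gamma = H_{k - 2}$ for the harmonic numbers, the $Z$-piece reduces to $16 \sum_{k = 3}^{n} H_{k - 2}/k$, which I would evaluate via the Euler sum $\sum_{j = 1}^{m} H_j/j = \tfrac{1}{2}\bigl(H_m^2 + H_m^{(2)}\bigr)$ with $H_m^{(2)} \to \pi^2/6$; this produces the $(\log n)^2$ and $\log n$ growth as well as the $\gamma^2$ and $\pi^2$ contributions to the constant level. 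The $Y$-piece, after the simplifications $(k - 2)\Gamma(k - 2) = \Gamma(k - 1)$ and $k(k - 1)\Gamma(k - 1) = \Gamma(k + 1)$, collapses to $\frac{128}{\sqrt{\pi}} \sum_{k} \Gamma(k - 1/2)/\Gamma(k + 1)$ plus a convergent harmonic remainder; here the Gauss-type identity $\sum_{k \ge 3} \Gamma(k - 1/2)/\Gamma(k + 1) = \tfrac{3\sqrt{\pi}}{4}$ cancels the spurious $\sqrt{\pi}$ and leaves a rational constant. The constant piece contributes a telescoping $8 \sum 1/\bigl(k(k - 1)\bigr)$.

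Finally I would form $\Var[Z_n] = \E[Z_n^2] - (\E[Z_n])^2$. Expanding $(\E[Z_n])^2 = 4(n - 1)^2 H_{n - 1}^2$ with $H_{n - 1} = \log n + \gamma + O(1/n)$ shows that its $n^2 (\log n)^2$, $n^2 \log n$, and $\gamma^2 n^2$ terms coincide with the corresponding terms of $\E[Z_n^2]$ up to $O(n^{3/2})$; these cancel, and what survives at order $n^2$ is the non-$\gamma$, non-logarithmic remainder $\bigl(16 - \tfrac{2\pi^2}{3}\bigr) n^2$. The main obstacle is the asymptotic bookkeeping in the middle step: after multiplication by $\frac{n(n - 1)}{2}$, every piece of the sum contributes at the $n^2$ level, so the constant $16 + 4\gamma^2 - \tfrac{2\pi^2}{3}$ can only be pinned down by carrying each Euler sum and each shifted harmonic sum to its exact constant term and correctly combining the several $\pi^2$-valued contributions. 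Maintaining a uniform $O(n^{3/2})$ error throughout—so that it survives the cancellation in the variance—is the delicate part; the remaining algebra is routine once the requisite harmonic-sum and Gamma-ratio identities are in hand.
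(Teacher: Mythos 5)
Your proposal follows essentially the same route as the paper: square the one-step recurrence for $Z_n$, use the selection probability $D_{n-1,j}/\bigl(2(n-2)\bigr)$ to convert the degree sums into $Y_{n-1}$ and $Z_{n-1}$ (arriving at exactly the conditional recurrence the paper derives), then solve the resulting first-order linear recurrence from $\E\left[Z_2^2\right]=4$ and subtract $\E^2[Z_n]$. Your outline is correct and in fact supplies more of the asymptotic bookkeeping (the Euler sum and the telescoping Gamma-ratio identity) than the paper, which simply states that the recurrence is solved.
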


\begin{proof}
	We revisit the almost-sure recurrence for $Z_n$ in Equation~(\ref{Eq:Zagreb}) and square both sides to get
	$$
	Z^2_n = Z^2_{n - 1} + 4 D^2_{n - 1, j} + 4 + 4 Z_{n - 1} D_{n - 1, j} + 4 Z_{n - 1} + 8 D_{n - 1, j}.
	$$
	Averaging it out with respect to $\indicator(n, j)$, we have
	\begin{align*}
	\E\left[Z^2_{n} \given \field_{n - 1}\right] &= Z_{n - 1}^2 + 4 \sum_{j = 1}^{n} D^2_{n - 1, j} \times \frac{D_{n - 1, j}}{2(n - 2)} + 4 
	\\ &\qquad{}+ 4 Z_{n - 1}\sum_{j = 1}^{n} D_{n - 1, j} \times \frac{D_{n - 1, j}}{2(n - 2)} + 4Z_{n - 1}
	\\ &\qquad\qquad{}+ 8\sum_{j = 1}^{n} D_{n - 1, j} \times \frac{D_{n - 1, j}}{2(n - 2)}
	\\ &= Z_{n - 1}^2 + \frac{2}{n - 2} Y_{n - 1} + 4 + \frac{2}{n - 2} Z^2_{n - 1} + 4 Z_{n - 1} + \frac{4}{n - 2} Z_{n - 1}
	\\ &= \frac{n}{n - 2} Z^2_{n - 1} + \frac{2}{n - 2} Y_{n - 1} + \frac{4(n - 1)}{n - 2} Z_{n - 1} + 4.
	\end{align*}
	The recurrence for $\E\left[Z^2_n\right]$ is thus obtained by taking the expectation of the formula above with respect to $\field_{n - 1}$ both sides, and by plugging in the results of $\E[Y_n]$ and $\E[Z_n]$. Solving the recurrence with initial condition $\E\left[Z^2_2\right] = Z^2_2 = 4$, we get the solution of $\E\left[Z^2_2\right]$ stated in the proposition. In what follows, we obtain the variance of $Z_n$ by computing $\E\left[Z^2_2\right] - \E^2[Z_n]$.
\end{proof}
Notice that $Z_n^2$ converges to $4(n \log{n})^2$ in $L_1$-space as well as in probability, both directly from the {\em Continuous Mapping Theorem}. Besides,we would like to point out that we derive the exact solution of $\E\left[Z^2_2\right]$, but do not present it in the manuscript for better readability. However, the exact solution is available upon request.

\subsection{Investigation of asymptotic behavior}
\label{Sec:normal}
A sharp concentration on the variance of a random variable usually suggests asymptotic normality. In this section, we investigate the asymptotic behavior of $Z_n$. We provide several persuasive evidence to show that $Z_n$ does not converge to normal for large $n$, significantly different from the Zagreb index of random recursive trees~\cite{Feng2011}.

Notice that $Z_n$'s are not independent random variables. In general, a common approach to assessing (plausible) Gaussian law for a sequence of dependent random variables is to consider a transformation on the random variables such that the new sequence is a martingale array, and then to apply the {\em Martingale Central Limit Theorem} on martingale differences~[Theorem 3.2, Corollary 3.1]\cite{Hall} or its extensions.
 
\begin{lemma}
	\label{Lem:martingale}
	For $n \ge 1$, the sequence consisting of
	$$M_n = \frac{2}{n - 1} Z_n -4\bigl(\Psi(n) + \gamma\bigr)$$
	is a martingale.
\end{lemma}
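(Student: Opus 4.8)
The plan is to show that $M_n$ is a martingale by verifying the defining identity $\E[M_n \given \field_{n-1}] = M_{n-1}$ directly from the one-step conditional recurrence for $Z_n$ that was already established in the proof of Proposition~\ref{Prop:Zagrebmean}. First I would recall that, averaging the almost-sure recurrence~(\ref{Eq:Zagreb}) over $\indicator(n,j)$, we obtained
\begin{equation*}
\E[Z_n \given \field_{n-1}] = \left(1 + \frac{1}{n-2}\right) Z_{n-1} + 2 = \frac{n-1}{n-2}\,Z_{n-1} + 2.
\end{equation*}
This is the sole probabilistic input needed; everything else is bookkeeping with the deterministic normalizing constants.

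Next I would compute $\E[M_n \given \field_{n-1}]$ by substituting the definition of $M_n$ and using linearity together with the fact that $\Psi(n)+\gamma$ is deterministic:
\begin{align*}
\E[M_n \given \field_{n-1}] &= \frac{2}{n-1}\,\E[Z_n \given \field_{n-1}] - 4\bigl(\Psi(n) + \gamma\bigr)
\\ &= \frac{2}{n-1}\left(\frac{n-1}{n-2}\,Z_{n-1} + 2\right) - 4\bigl(\Psi(n) + \gamma\bigr)
\\ &= \frac{2}{n-2}\,Z_{n-1} + \frac{4}{n-1} - 4\bigl(\Psi(n) + \gamma\bigr).
\end{align*}
I would then recognize the leading term $\frac{2}{n-2}\,Z_{n-1}$ as exactly the $Z$-part of $M_{n-1} = \frac{2}{n-2}\,Z_{n-1} - 4\bigl(\Psi(n-1)+\gamma\bigr)$, so that matching $M_n$ to $M_{n-1}$ reduces to the purely deterministic identity
\begin{equation*}
\frac{4}{n-1} - 4\bigl(\Psi(n) + \gamma\bigr) = -4\bigl(\Psi(n-1) + \gamma\bigr),
\end{equation*}
which is equivalent to the standard digamma recurrence $\Psi(n) = \Psi(n-1) + \frac{1}{n-1}$. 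This collapses the whole computation, confirming $\E[M_n \given \field_{n-1}] = M_{n-1}$.

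To finish, I would address integrability and adaptedness for completeness: $Z_n$ is a bounded random variable (it is a finite sum of squared degrees, each at most $n$), hence $\E|M_n| < \infty$, and $M_n$ is $\field_n$-measurable by construction, so the sequence is a genuine martingale with respect to $(\field_n)$. I do not anticipate a serious obstacle here; the only point requiring care is that the one-step recurrence is valid for $n \ge 3$ (the coefficient $1/(n-2)$ is undefined at $n=2$), so I would treat the small initial indices $n=1,2$ separately or simply note that the martingale identity is asserted for the range where the recurrence applies, checking the base cases by hand against the closed form $\E[Z_n] = 2(n-1)\bigl(\Psi(n)+\gamma\bigr)$ from Proposition~\ref{Prop:Zagrebmean}, which guarantees $\E[M_n] = 0$ for all $n$ and is consistent with the martingale property.
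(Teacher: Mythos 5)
Your proof is correct and is essentially the paper's argument run in reverse: the paper posits $M_n = \alpha_n Z_n + \beta_n$ and solves the resulting recurrences for $\alpha_n$ and $\beta_n$ to discover the stated coefficients, while you take the stated coefficients as given and verify the martingale identity directly via the digamma recurrence $\Psi(n) = \Psi(n-1) + \frac{1}{n-1}$; both arguments rest on exactly the same probabilistic input, namely $\E[Z_n \given \field_{n-1}] = \frac{n-1}{n-2}\,Z_{n-1} + 2$. Your additional remarks on integrability, adaptedness, and the $n \ge 3$ restriction of the recurrence are sound and mirror the paper's own caveat that the recurrence is only defined from $n = 3$ onward even though the formula for $M_n$ makes sense for all $n \ge 1$.
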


\begin{proof}
	We consider two constant sequences $\{\alpha_n\}_n$ and $\{\beta_n\}_n$ such that the following martingale property holds for all $n \ge 3$.
	\begin{align*}
	\E[\alpha_n Z_n + \beta_n \given \field_{n - 1}] &= \alpha_{n} \E[Z_n \given \field_{n - 1}] + \beta_n
	\\ &= \frac{\alpha_n (n - 1)}{n - 2} Z_{n - 1} + 2 \alpha_n + \beta_n.
	\\ &= \alpha_{n - 1} Z_{n - 1} + \beta_{n - 1}
	\end{align*}
		This produces two recurrences on $\alpha_n$ and $\beta_n$, respectively,
	$$\alpha_n = \frac{n - 2}{n - 1} \, \alpha_{n - 1} \qquad \mbox{and} \qquad \beta_n = \beta_{n - 1} - 2 \alpha_n,$$
	with arbitrary choices of initial conditions. We thus obtain the following solutions
	$$\alpha_n = \frac{2}{n - 1} \qquad \mbox{and} \qquad \beta_n = -4\bigl(\Psi(n) + \gamma\bigr),$$
	by choosing initial conditions $\alpha_3 = 1$ for the former and $\beta_1 = 0$ for the latter, respectively. However, notice that the result stated in the lemma is well defined for all $n \ge 1$.
\end{proof} 

Noting that the martingale $M_n$ is equivalent to
$$M_n = \frac{Z_n - \E[Z_n]}{(n - 1)/2},$$
we have
$$\E[M_n] = 0 \qquad \mbox{and} \qquad \E\left[M_n^2\right] = \frac{\Var[Z_n]}{(n - 1)^2/4} = 64 - \frac{8 \pi^2}{3} < + \infty,$$
leading to the fact that $\{M_n\}_n$ is a mean-zero and square-integrable martingale. Moreover, according to the {\em Doob's Martingale Convergence Theorem}, there is an $L_2$-measurable random variable, to which $M_n$ converges; So is $Z_n$ after properly scaled. Let us define martingale differences, which are expressed in terms of a difference operator, $\nabla M_j = M_j - M_{j - 1}$. In the next lemma, we show that $|\nabla M_j|$'s are uniformly bounded for all $j$.

\begin{lemma}
	The terms $|\nabla M_j|$ are absolutely uniformly bounded for $j = 2, 3, \ldots n$.
\end{lemma}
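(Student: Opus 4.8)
The plan is to compute $\nabla M_j = M_j - M_{j-1}$ in closed form and show it is bounded by an absolute constant. First I would substitute the definition $M_j = \frac{2}{j-1} Z_j - 4\bigl(\Psi(j) + \gamma\bigr)$ and bring in two ingredients: the almost-sure one-step recurrence behind~(\ref{Eq:Zagreb}), which upon conditioning on the (random) label $k$ of the node recruited at step $j$ reads $Z_j = Z_{j-1} + 2 D_{j-1, k} + 2$, together with the digamma identity $\Psi(j) - \Psi(j-1) = 1/(j-1)$. Carrying the simplification $\frac{2}{j-1} - \frac{2}{j-2} = -\frac{2}{(j-1)(j-2)}$ alongside this identity, the $O(1)$ and the $\frac{4}{j-1}$ contributions cancel, collapsing the difference (for $j \ge 3$) to the clean form
$$\nabla M_j = -\frac{2 Z_{j-1}}{(j-1)(j-2)} + \frac{4 D_{j-1, k}}{j-1}.$$

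Next I would bound the two surviving terms using only elementary structural facts about $T_{j-1}$. Since $T_{j-1}$ is a tree on $j-1$ nodes it has $j-2$ edges, so the degrees sum to $2(j-2)$ and no single degree exceeds $j-2$; in particular $D_{j-1, k} \le j - 2$, which gives $4 D_{j-1, k}/(j-1) \le 4$. For the first term I would use the crude but sufficient estimate $Z_{j-1} = \sum_i D^2_{j-1, i} \le \bigl(\max_i D_{j-1, i}\bigr) \sum_i D_{j-1, i} \le 2(j-2)^2$, whence $2 Z_{j-1}/\bigl((j-1)(j-2)\bigr) \le 4(j-2)/(j-1) \le 4$. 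By the triangle inequality $|\nabla M_j| \le 8$, uniformly in $j$ and in the random outcome, which is the desired conclusion.

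Finally I would dispatch the boundary index $j = 2$, for which the displayed form is undefined owing to the $(j-2)$ factor: here $\Psi(2) + \gamma = 1$ and $Z_2 = 2$ force $M_2 = 0$, and with the natural convention $M_1 = 0$ (consistent with $Z_1 = 0$ and $\Psi(1) = -\gamma$) one gets $\nabla M_2 = 0$, comfortably within the bound. I expect the only genuine subtlety to be the cancellation in the first step: one must apply the recurrence for $Z_j$ and the digamma identity simultaneously so that the constant-order pieces truly vanish, leaving exactly the two terms controlled by the degree-sum constraint of a tree. Beyond that the argument is routine estimation, and notably no martingale machinery is invoked for this lemma—only the pathwise recurrence and the fact that degrees in $T_{j-1}$ sum to $2(j-2)$.
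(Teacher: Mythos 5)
Your proof is correct, and it takes a recognizably different (and cleaner) route than the paper's. The paper never computes $\nabla M_j$ exactly: it applies the triangle inequality to $\alpha_j Z_j + \beta_j - \alpha_{j-1}Z_{j-1} - \beta_{j-1}$, splitting the increment into three pieces, $\alpha_{j-1}|Z_j - Z_{j-1}|$, $\frac{\alpha_{j-1}}{j-1}Z_j$ and $|\beta_j - \beta_{j-1}| = \frac{4}{j-1}$, and then bounds each using the same two structural facts you invoke (the one-step jump $Z_j - Z_{j-1} = 2D_{j-1,k}+2$ controlled by the maximal degree, and the star tree as the extremal case for the Zagreb index). This yields the $j$-dependent bound $\frac{6j^2-8j-2}{(j-1)(j-2)}$, which must then be observed to be decreasing in $j\ge 3$ to conclude uniform boundedness. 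You instead exploit the exact cancellation between the constant term of the recurrence and the digamma increment $\Psi(j)-\Psi(j-1)=1/(j-1)$, collapsing $\nabla M_j$ to the two-term closed form $-\frac{2Z_{j-1}}{(j-1)(j-2)} + \frac{4D_{j-1,k}}{j-1}$, each piece of which is bounded by $4$ outright; this buys an explicit uniform constant ($8$) with no monotonicity argument, and your separate treatment of the boundary case $j=2$ (where $M_2=0$ and $M_1=0$ under the paper's own initialization $\beta_1=0$) is a point the paper glosses over. The only cosmetic caveat is that your bound $Z_{j-1}\le 2(j-2)^2$ is slightly cruder than the paper's exact star maximum $(j-2)^2+(j-2)$, but it suffices for the stated inequality.
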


\begin{proof}
	By the construction of the martingale in Lemma~\ref{Lem:martingale}, we have
	\begin{align*}
	|\nabla M_j| &= |M_j - M_{j - 1}|
	\\ &= \bigl|\alpha_j Z_j + \beta_j - (\alpha_{j - 1} Z_{j - 1} + \beta_{j - 1})\bigr| 
	\\ &\le \alpha_{j - 1} \left| \frac{\alpha_j}{\alpha_{j - 1}} Z_j - Z_{j - 1}\right| + |\beta_j - \beta_{j - 1}|
	\\ &= \alpha_{j - 1} \left| \frac{j - 2}{j - 1} Z_j - Z_{j - 1}\right| + \frac{4}{j - 1}
	\\ &\le \alpha_{j - 1} | Z_j - Z_{j - 1}| + \frac{\alpha_{j - 1}}{j - 1} Z_j + \frac{4}{j - 1}
	\\ &\le \frac{2(2j - 3)}{j - 2} + \frac{2 \bigl( (j - 1)^2 + (j - 1) \bigr)}{(j - 1)(j - 2)} + \frac{4}{j - 1}
	\\ &= \frac{6j^2 - 8j - 2}{(j - 1)(j - 2)},
	\end{align*}
	which is strictly decreasing for $j \ge 3$.
\end{proof}

To use the Martingale Central Limit Theorem\cite[Corollary 3.1]{Hall}, two conditions need to be verified. The first is known as the {\em Lindeberg's condition}, given by
$$U_n := \sum_{j = 1}^{n} \E \left[\left(\nabla M_j\right)^2 \indicator\bigl(\left|\nabla M_j\right| > \varepsilon|\bigr) \given \field_{j - 1}\right] \convP 0.$$
The Lindberg's condition is not satisfied, albeit we can show $|\nabla M_j| \convP 0$. What is needed indeed is a stronger statement: $\max_j|\nabla M_j| \convP 0$, which is apparently not true in our case.

Besides, we look at the conditional variance condition, given by
	$$V_n := \sum_{j = 1}^{n} \E \left[\left(\nabla M_j\right)^2 \given \field_{j - 1}\right] \convP \eta^2,$$
where $\eta^2$ is a finite almost-sure random variable. We compute $V_n$ exactly, and get
$$V_n = \left[\left(64 - \frac{8 \pi^2}{3}\right)n + O \left(n^{1/2}\right)\right],$$
which converges only when scaled by $n$. 

As neither the Lindeberg's condition nor the conditional variance condition is verified, it is evident that $M_n$ is not normally distributed as $n \to \infty$. 

In addition, we conduct a numeric study to further investigate the asymptotic behavior of $Z_n$. We generate $5000$ independent PORTs at time $50000$, compute the Zagreb index of each of the simulated trees, and plot the estimated density function in Figure~\ref{Fig:sim} via a kernel method~\cite{Silverman}.
\begin{figure}
	\begin{center}
		\includegraphics[scale=0.37]{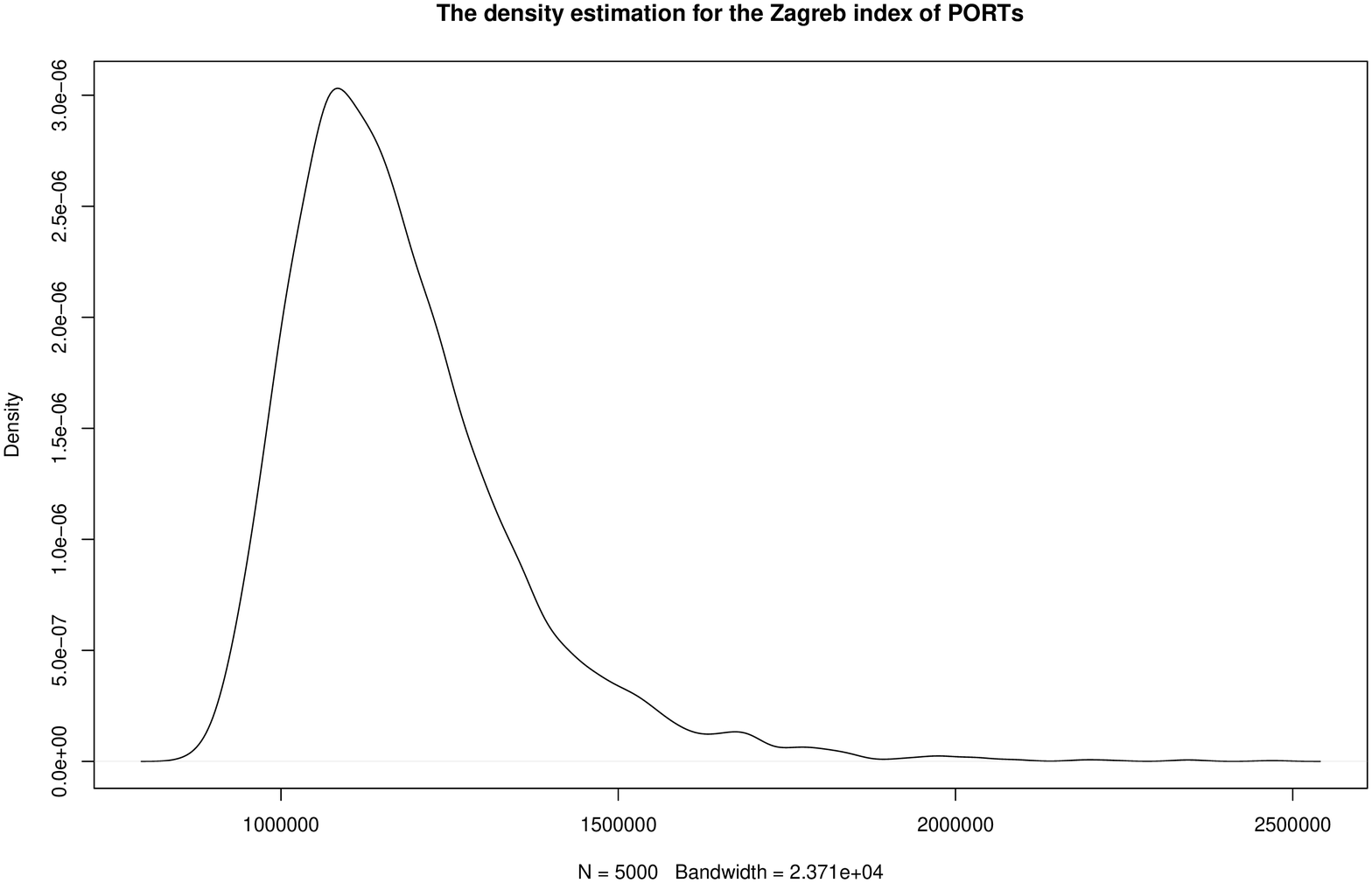}
	\end{center}
\caption{}
\label{Fig:sim}
\end{figure}
The estimated density is obviously skewed to the left, suggesting that $Z_n$ is not normally distributed for large $n$. Lastly, we run the {\em Shapiro-Wilk} test on dataset consisting of the $5000$ Zagreb indices of the simulated PORTs. The $p$-value is less than $2.2 \times 10^{-16}$, strongly against the null hypothesis for assuming normality.

In~\cite{Feng2011}, the authors proved that the limiting distribution of the Zagreb index of random recursive trees is normal with mean $6n$ and variance $8n$. Based off our investigation, we conjecture that asymptotic normality does not exist for PORTs. A possible reason that causes skewness may be the feature of the growth of the model, i.e., preferential attachment. A node that already has large degree in the tree is more and more likely to attract subsequent newcomers, potentially destroying the balance of growth as well as symmetry.

\section{Poissonized plane-oriented recursive trees}
\label{Sec:Pois}
Many real structures do not grow in discrete time, but in continuous time. In this section, we study PORTs embedded into continuous time. The embedding is done by changing the {\em interarrival} times between node additions from equispaced discrete units to more general renewal intervals. One~\cite{Athreya} suggests to use exponential random variables as interarrival time. Under this choice, a count of the arrival points constitutes a {\em Poisson process}~\cite{Ross}. Hence, such embedding is commonly called {\em Poissonization}~\cite{Aldous}. The advantage of Poissonization is that the underlying exponential random variables (interarrival times) share an appealing property---the {\em memoryless} property.

We elaborate the growth of a Poissonized PORT by employing an extended graph analogous to that for random recursive trees in~\cite{Mahmoud1992}. As mentioned in Section~\ref{Sec:moments}, an extended PORT is obtained by filling all the gaps with external nodes. Under the Poissonization framework, each external node is endowed with an independent clock that rings in $\Expo(1)$. When the clock of an external node rings, a newcomer joins in the tree, and is connected with the node (in the original tree) that carries that external node by an edge. Then, all the new gaps are filled by new external nodes instantaneously. Upon each renewal, the clocks of existing external nodes are reset owing to the memoryless property, and the new external nodes come endowed with their own independent clocks. We do not consider the time loss of the execution of node additions. Thus, this growth process is {\em Markovian}.

To investigate the degree distribution of the node labeled with $j$, we assume that $t_0$, the time of its first appearance in the tree, is finite. At this point, there is $1$ external node carried by the node labeled with $j$, and we paint it white; Meanwhile, there is a total of $(2j - 3)$ external nodes carried by all the other nodes, and we paint them blue. In the two-color \Polya\ urn framework, the dynamic of ball addition (at each renewal point) is analogous to that for the discrete-time counterpart, so it also can be represented by Matrix~(\ref{Mat:replacement}). The feature of preferential attachment is reflected in the number of external nodes adjacent to the nodes from the original tree. Let $W(t)$ and $B(t)$ be the numbers of white and blue balls (external nodes) at time $t \ge t_0$, respectively. Noting that $W(t)$ is exactly equal to the degree of the node labeled with $j$ at time $t$, we thus place our focus on the distribution of $W(t)$.

Recall that Matrix~(\ref{Mat:replacement}) is triangular, the associated (Poissonized) \Polya\ urn process is also called triangular \Polya\ process. This class of urn models was recently investigated by~\cite{ChenChen}. In this source, the moment generating function of $W(t)$ under a more general framework was developed. For our specific setting, we present the moment generating function of $W(t)$ in the next proposition.

\begin{prop}
	At time $t \ge t_0$, the moment generating function of $W(t)$ is
	$$\phi_{W(t)}(u) = \frac{e^{u - (t - t_0)}}{1 - (1 - e^{-(t - t_0)}) e^u}.$$
\end{prop}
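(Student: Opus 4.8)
The plan is to exploit the fact that the replacement matrix~(\ref{Mat:replacement}) is triangular with a vanishing lower-left entry, which renders the white-ball count completely insensitive to draws of blue balls. First I would observe that, under Poissonization, each white ball carries its own independent $\Expo(1)$ clock, and whenever such a clock rings exactly one new white ball is created (together with one blue ball, which is irrelevant to $W(t)$). Since a blue draw contributes no white ball, the blue dynamics decouple entirely, and $\{W(t)\}_{t \ge t_0}$ is a \emph{linear pure-birth (Yule) process} with unit per-capita birth rate, started from $W(t_0) = 1$. This reduction is the conceptual heart of the argument, and it is exactly what makes the triangular structure so convenient.

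Next I would translate the Yule description into an equation for the probability generating function $G(z, s) := \E\bigl[z^{W(t_0 + s)}\bigr]$, where $s = t - t_0$. Writing $p_k(s) = \Prob\bigl(W(t_0 + s) = k\bigr)$, the forward Kolmogorov equations read $\frac{d}{ds} p_k = (k - 1) p_{k - 1} - k p_k$, and a standard manipulation of the resulting power series yields the first-order linear partial differential equation
$$\frac{\partial G}{\partial s} = z(z - 1) \frac{\partial G}{\partial z}, \qquad G(z, 0) = z,$$
the initial condition reflecting $W(t_0) = 1$.

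I would then solve this equation by the method of characteristics. Along a characteristic curve $G$ is constant while $\frac{dz}{ds} = -z(z - 1)$; separating variables and using the partial-fraction decomposition of $1/\bigl(z(z-1)\bigr)$ shows that the quantity $\frac{z}{z - 1}\,e^{-s}$ is a characteristic invariant. Matching the initial datum $G(z, 0) = z$ fixes the arbitrary function and produces the closed form $G(z, s) = ze^{-s}/\bigl(1 - z(1 - e^{-s})\bigr)$. Substituting $z = e^u$ and $s = t - t_0$ gives precisely the stated moment generating function $\phi_{W(t)}(u)$; equivalently, $G$ is the generating function of the geometric law $\Prob\bigl(W(t_0 + s) = k\bigr) = e^{-s}(1 - e^{-s})^{k - 1}$ characteristic of the Yule process. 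A final remark would record that the underlying geometric series converges, so that $\phi_{W(t)}(u)$ is finite, precisely for $u < \log\bigl(1/(1 - e^{-(t - t_0)})\bigr)$, an interval containing a neighbourhood of the origin for every $t > t_0$.

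I expect the main obstacle to be not any single computation but the clean justification of the decoupling in the first step---arguing rigorously that the continuous-time white-ball count is genuinely a Yule process that evolves independently of the blue balls, rather than merely a marginal extracted from a coupled two-type process. Once that reduction is secured, solving the characteristic ordinary differential equation and matching the initial condition are routine, and the substitution $z = e^u$ is immediate. As an alternative to the characteristic computation, should it prove delicate, one can instead cite the general triangular \Polya\ process moment generating function of~\cite{ChenChen} and specialize its parameters to Matrix~(\ref{Mat:replacement}); I would keep this as a fallback but prefer the self-contained Yule argument for transparency.
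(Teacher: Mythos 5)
Your proposal is correct, but it takes a genuinely different route from the paper. The paper's entire ``proof'' is a one-line specialization: it observes that the Poissonized urn governed by Matrix~(\ref{Mat:replacement}) is a continuous-time triangular \Polya\ process and reads the moment generating function off from~\cite[Lemma 4.3]{ChenChen} by plugging in the parameters --- precisely the fallback you mention at the end. Your main argument is instead self-contained: you use the vanishing lower-left entry of the replacement matrix to decouple the white-ball count from the blue balls, identify $W(t)$ as a unit-rate Yule process started from a single individual at time $t_0$, and then derive its probability generating function from the forward Kolmogorov equations by characteristics, arriving at the geometric law $e^{-s}(1-e^{-s})^{k-1}$ whose generating function, evaluated at $z = e^u$, is exactly the stated $\phi_{W(t)}(u)$. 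The computations all check out (the invariant $\tfrac{z}{z-1}e^{-s}$, the matching of $G(z,0)=z$, and the domain $u < \log\bigl(1/(1-e^{-(t-t_0)})\bigr)$ are all right), and the decoupling you worry about is the standard Athreya--Karlin observation: each white external node rings independently at rate $1$ and every white ring adds exactly one white node while blue rings add none, so the white marginal is Markov on its own. What your approach buys is transparency and independence from the external reference, plus the explicit geometric distribution of $W(t)$ as a byproduct; what the paper's approach buys is brevity and immediate access to all higher moments of $W(t)$ in the general triangular setting, which the paper uses in the ensuing corollary.
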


This result is obtained directly from~\cite[Lemma 4.3]{ChenChen} by plugging in appropriate parameters. Then, the $r$th moment of $W(t)$ can be derived from $\phi_{W(t)}(u)$ for all $r \ge 1$. The expression of the $r$th moment of $W(t)$ is available but not in a closed form, rather in a partial sum of an alternating sequence involving {Stirling numbers of the second kind} and gamma functions. Thus, we do not present all the moments of $W(t)$ in this paper, but only the first two moments (after simplification) and accordingly the variance in the next corollary. 

\begin{corollary}
	At time $t$, the first moment, second moment and variance of $W(t)$ respectively are
	\begin{align*}
	\E[W(t)] &= e^{t - t_0},
	\\ \E\left[W^2(t)\right] &= 2e^{2(t - t_0)} - e^{t - t_0},
	\\ \Var[W(t)] &= e^{2(t - t_0)} - e^{t - t_0}.
	\end{align*}
\end{corollary}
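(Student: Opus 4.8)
The plan is to read off all three moments directly from the moment generating function $\phi_{W(t)}(u)$ established in the preceding proposition, so no new probabilistic machinery is required. Writing $s = t - t_0$ for brevity and using the identity $e^{-s} = 1 - (1 - e^{-s})$, I would first recast the moment generating function in the form
$$\phi_{W(t)}(u) = \frac{q\, e^u}{1 - (1 - q)\, e^u}, \qquad q = e^{-s},$$
which I recognize as the moment generating function of a geometric random variable supported on $\{1, 2, 3, \ldots\}$ with success parameter $q$. Once this identification is made, the three displayed formulas follow immediately from the standard moments of the geometric law, with $q = e^{-(t - t_0)}$ substituted back at the end.

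If one prefers a self-contained derivation that does not invoke the geometric distribution by name, the moments can be extracted by differentiating $\phi_{W(t)}(u)$ in $u$ and evaluating at $u = 0$. The first derivative, simplified by the quotient rule, gives $\E[W(t)] = \phi'_{W(t)}(0) = 1/q = e^{s}$. A second differentiation yields $\E\left[W^2(t)\right] = \phi''_{W(t)}(0)$, which after combining terms collapses to $(2 - q)/q^2 = 2e^{2s} - e^s$. The variance is then obtained as $\E\left[W^2(t)\right] - \E^2[W(t)] = (1 - q)/q^2 = e^{2s} - e^s$. Re-expressing $s = t - t_0$ in each case produces precisely the expressions stated in the corollary.

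The computation carries no conceptual difficulty; the only care needed lies in the algebraic simplification of $\phi''_{W(t)}(0)$, where several powers of $q = e^{-(t - t_0)}$ must be collected correctly, and in tracking the substitution back to $t - t_0$ throughout. I expect this bookkeeping in the second derivative to be the main (and only minor) obstacle, which is exactly why the geometric-distribution identification is worth making first: it reduces the entire corollary to quoting the mean $1/q$ and variance $(1-q)/q^2$ of a known law, sidestepping the differentiation altogether.
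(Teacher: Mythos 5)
Your proposal is correct and follows essentially the same route as the paper, which likewise obtains these moments by differentiating the moment generating function $\phi_{W(t)}(u)$ from the preceding proposition (the paper merely asserts the computation rather than displaying it, noting that general $r$th moments involve Stirling numbers of the second kind). Your additional observation that $\phi_{W(t)}(u) = q e^u / \bigl(1 - (1-q)e^u\bigr)$ with $q = e^{-(t-t_0)}$ is the moment generating function of a geometric law on $\{1, 2, \ldots\}$ is accurate and gives a clean shortcut to the same three formulas, though it is not made explicit in the paper.
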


Noticing that the probability distribution of a random variable is uniquely determined by its moment generating function provided it exists, we determine the asymptotic distribution of $W(t)$ after properly scaled.

\begin{theorem}
	As $t \to \infty$, we have
	$$W(t) / e^{t} \convD \Expo\left(1/e^{t_0}\right).$$
\end{theorem}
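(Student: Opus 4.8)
The plan is to work directly with the moment generating function $\phi_{W(t)}(u)$ given in the preceding proposition and show that, under the scaling $W(t)/e^t$, it converges pointwise to the moment generating function of an exponential random variable with the claimed rate parameter. By the continuity theorem for moment generating functions (the distribution is uniquely determined by an mgf when it exists in a neighborhood of the origin), pointwise convergence of mgfs in such a neighborhood suffices to conclude convergence in distribution. The target is the $\Expo(\lambda)$ law with $\lambda = e^{-t_0}$, whose moment generating function is $\lambda/(\lambda - u) = 1/(1 - e^{t_0} u)$ for $u < e^{-t_0}$.

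First I would compute the mgf of the scaled variable $W(t)/e^t$ by substituting $u \mapsto u/e^t = u e^{-t}$ into the given formula, obtaining
$$\phi_{W(t)/e^t}(u) = \frac{e^{u e^{-t} - (t - t_0)}}{1 - \bigl(1 - e^{-(t - t_0)}\bigr) e^{u e^{-t}}}.$$
Next I would take the limit as $t \to \infty$ with $u$ fixed in a small neighborhood of $0$. The numerator behaves like $e^{t_0 - t} \to 0$, so the delicate point is that the denominator also tends to $0$, and the whole expression is of indeterminate $0/0$ form; the limit must come from matching the rates at which numerator and denominator vanish. I would expand $e^{u e^{-t}} = 1 + u e^{-t} + O(e^{-2t})$ and $e^{-(t-t_0)} = e^{t_0} e^{-t}$, so that the denominator becomes
$$1 - \bigl(1 - e^{t_0} e^{-t}\bigr)\bigl(1 + u e^{-t} + O(e^{-2t})\bigr) = (e^{t_0} - u)\,e^{-t} + O(e^{-2t}),$$
while the numerator is $e^{t_0 - t}\bigl(1 + O(e^{-t})\bigr)$. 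Dividing, the factors of $e^{-t}$ cancel, leaving the ratio
$$\frac{e^{t_0}}{e^{t_0} - u} = \frac{1}{1 - e^{t_0} u},$$
which is exactly the mgf of $\Expo(e^{-t_0})$.

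The main obstacle I anticipate is the indeterminate form in the limit: both numerator and denominator vanish at the same exponential rate, so a naive substitution gives $0/0$ and one must carry the Taylor expansions to first order in $e^{-t}$ and verify that the subleading $O(e^{-2t})$ corrections are genuinely negligible relative to the leading $e^{-t}$ terms throughout a fixed neighborhood of $u = 0$. I would therefore be careful to establish that the convergence is valid on an interval $u \in (-\delta, e^{-t_0})$ (or at least some open interval containing the origin), which is what the mgf continuity theorem requires. A clean alternative, should the direct expansion become delicate, is to factor out the leading $e^{-t}$ from numerator and denominator before taking the limit, making the cancellation explicit and the remaining limit routine; either route yields the same conclusion.
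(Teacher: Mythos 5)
Your method is the same as the paper's: both proofs pass to the limit in the moment generating function of the scaled degree variable and conclude by the uniqueness/continuity property of mgfs. The only structural difference is that the paper first normalizes by $e^{t-t_0}$, obtains the clean limit $1/(1-u)$ (a standard exponential), and then invokes the scaling property of exponential random variables, whereas you normalize by $e^t$ in one step; your explicit first-order expansion of the $0/0$ form is actually more careful than the paper, which simply asserts the limit.

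There is, however, an algebra slip in your final step that leaves the write-up internally inconsistent. Your expansion correctly produces the limiting ratio $e^{t_0}/(e^{t_0}-u)$, but this equals $1/(1-e^{-t_0}u)$, not $1/(1-e^{t_0}u)$ as you wrote. The expression $1/(1-e^{-t_0}u)$ is the mgf of an exponential with mean $e^{-t_0}$ (equivalently, rate $e^{t_0}$), which is consistent with $\E[W(t)/e^t]=e^{-t_0}$ from the preceding corollary; this is the law the paper denotes $\Expo(1/e^{t_0})$, since the paper parametrizes the exponential by its mean (as one sees from its intermediate conclusion $W(t)/e^{t-t_0} \convD \Expo(1)$ with mgf $1/(1-u)$). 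Your declared target, ``$\Expo(\lambda)$ with rate $\lambda=e^{-t_0}$,'' has mean $e^{t_0}$ and mgf $1/(1-e^{t_0}u)$, which is not what your computation yields and cannot be the limit of a variable whose mean tends to $e^{-t_0}$. In short: the computation is right and identifies the correct limiting distribution, but the false equality $e^{t_0}/(e^{t_0}-u)=1/(1-e^{t_0}u)$ and the rate-versus-mean confusion in naming the target must be corrected; the limit is the exponential with mean $1/e^{t_0}$.
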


\begin{proof}
	According to the moment generating function of $W(t)$, we derive the moment generating function of $\tilde{W}(t) = W(t)/e^{t - t_0}$ as follows:
	$$
	\phi_{\tilde{W}(t)}(u) = \E\left[e^{(u/e^t) W(t)}\right] = \frac{e^{u/e^{t - t_0} - (t - t_0)}}{1 - (1 - e^{-(t - t_0)}) e^{u/e^{t - t_0}}},
	$$
	which converges to $1/(1 - u)$ as $t \to \infty$. Noticing that $1/(1 - u)$ is the moment generating function of $\Expo(1)$, we thus have
	$$W(t) / e^{t - t_0} \convD \Expo(1).$$
	The result stated in the theorem follows the scaling property of exponential random variables.
\end{proof}

\section{Concluding remarks}
\label{Sec:concl}
In the last section, we add some concluding remarks and propose some future work. In this paper, we investigate three properties of PORTs. First, we determine the degree distribution of a node with a fixed degree by developing its probability mass function. Additionally, we compute the first two moments by exploiting a two-color triangular urn. 

Second, we look into the Zagreb index of the class of PORTs. We calculate the exact mean and variance via recurrence methods. Weak laws are developed as well. Towards the asymptotic behavior of the Zagreb index, we conjecture that it is not normal, significantly different from that for random recursive trees. We substantiate our conjecture by showing the invalidity of the two sufficient conditions for the Central Limit Theorem as well as a numeric example. One of our future work is to develop a rigorous method in support of our conjecture. Besides, we plan to investigate many other topological indices, such as the Gini index and the Randi\'{c} index, for PORTs in the future research.

Last, we study the degree profile of PORTs embedded into continuous time, so-called Poissonized PORTs. We interpret the growth of Poissonized PORTs by introducing extended trees. The exact moment generating function of the degree variable is determined. We show that the asymptotic distribution of the degree variable scaled by $e^t$ is exponential.



\end{document}